\newcommand*{\A}{\mathcal{A}}
\newcommand*{\C}{\mathcal{C}}
\newcommand*{\E}{\mathcal{E}}
\newcommand*{\G}{\mathcal{G}}
\newcommand*{\N}{\mathcal{N}}
\newcommand*{\R}{\mathcal{R}}
\newcommand{\GD}{\mathcal{G}^{(2)}}
\newcommand{\ED}{\E^{(2)}}
\newcommand{\GZ}{\mathcal{G}^{(0)}}
\newcommand{\HH}{\mathcal{H}}
\newcommand{\HZ}{\mathcal{H}^{(0)}}
\newcommand{\id}{\operatorname{id}} 
\newcommand{\iso}{\operatorname{Iso}} 
\DeclareMathOperator{\Ext}{Ext}
\newcommand{\cf}{\mbox{cf.}\xspace}				
\newcommand{\eg}{\mbox{e.\,g.}\xspace}			
\newcommand*{\ie}{\mbox{i.\,e.}\xspace}			
\newcommand{\wilog}{\mbox{w.\,l.\,o.\,g.}\xspace}	
\newcommand{\wrt}{\mbox{w.\,r.\,t.}\xspace}		
\newcommand*{\ndash}{\nobreakdash-}
\newcommand*{\Star}{$^*$\ndash}
\theoremstyle{plain}
	\newtheorem{teo}{Theorem}[section]
	\newtheorem{lemma}[teo]{Lemma}
	\newtheorem{cor}[teo]{Corollary}
	\newtheorem{prop}[teo]{Proposition}
\theoremstyle{definition}
	\newtheorem{defi}[teo]{Definition}
	\newtheorem{rem}[teo]{Remark}
	\newtheorem{example}[teo]{Example}
\title{Non-Abelian extensions of groupoids\\ and their groupoid rings}
\date{\today}
\begin{document}

\author{Natã Machado \thanks{
	    Universidade Federal de Santa Catarina,
		\href{mailto:nmachado92@gmail.com}{\nolinkurl{nmachado92@gmail.com}}
	} \and
Johan \"Oinert \thanks{
		Blekinge Tekniska H\"ogskola,
		\href{mailto:johan.oinert@bth.se}{\nolinkurl{johan.oinert@bth.se}}
	} \and 
Stefan Wagner \thanks{
		Blekinge Tekniska H\"ogskola,
		\href{mailto:stefan.wagner@bth.se}{\nolinkurl{stefan.wagner@bth.se}}
	}
}
\sloppy
\maketitle

\begin{abstract}
\noindent
    We present a geometrically oriented classification theory for \mbox{non-Abelian} extensions of groupoids generalizing the classification theory for Abelian extensions of groupoids by Westman as well as the familiar classification theory for non-Abelian extensions of groups by Schreier and Eilenberg-Mac~Lane.
    As an application of our techniques we~demonstrate that each extension of groupoids $\N \to \E \to \G$ gives rise to a groupoid crossed~product of $\G$ by the groupoid ring of $\N$ which recovers the groupoid ring of~$\E$ up to isomorphism.
    Furthermore, we make the somewhat surprising observation that our classification methods naturally transfer to the class of groupoid crossed products, thus providing a classification theory for this class of rings.
    Our study is motivated by the search for natural examples of groupoid crossed products.
    
\vspace*{0.1cm}
\noindent
    Keywords: Non-Abelian extension of groupoids, factor system, groupoid cohomology, groupoid crossed product, groupoid ring, groupoid C\Star algebra.

\vspace*{0.1cm}
\noindent
    MSC2020: 20L05, 16S35, 16S99 (primary); 16E40, 16W50 (secondary)
\end{abstract}

\listoffixmes


\section{Introduction}

The problem of classifying all extensions of a given group $G$ by a group $N$ is a core problem in group theory and may be found in many expositions.
The first systematic treatments seem to originate in Schreier’s PhD thesis from 1923 (see also \cite{Schreier1925}) and in the work of Baer~\cite{Baer34} from the 1930s.
Cohomological methods used to study group extensions first appeared in the seminal papers by Eilenberg and MacLane~\cite{MacEil42,EilenbergMacLane}.
Another curious reference is due to computer scientist Alan Turing~\cite{Turing38}.
The central concept underlying a group extension is that of a so-called \emph{factor system},
which determines and is determined by the group extension.
Examples and applications of group extensions can be found in almost all disciplines of modern mathematics.
For instance, non-Abelian extensions of Lie groups occur quite naturally in the context of smooth principal bundles over compact manifolds, and as an application thereof in mathematical gauge theory (see, \eg, \cite{Neeb07,Wockel07} and the ref.~therein).
Furthermore, a counterexample to Kaplansky’s famous unit conjecture for group rings has recently been given by Gardam~\cite{Gardam21} by means of Passman’s fours group, which is a group extension of $\mathbb{Z}_2 \times \mathbb{Z}_2$ by~$\mathbb{Z}^3$.

In 1926, Brandt~\cite{Brandt1927} introduced the notion of a groupoid as a generalization of a group.
Since then, the theory of groupoids has flourished into an area of active research and applications of groupoids appear in areas such as
fibre bundle theory,
differential geometry,
foliation theory,
differential topology,
ergodic theory,
functional analysis,
homotopy theory,
and
algebraic geometry,
\cite{Brown1987}.

The classification problem for Abelian groupoid extensions seems to originate in the work of Westman \cite{Westman71}, in which he developed a cohomology theory for groupoids that extends the usual (Abelian) cohomology theory for groups.
About a decade later Renault~reproduced Westman's theory in his pioneering study of C\Star algebras~\cite{Ren80}, thus spotlighting it for operator algebraists and functional analysts.
Another two decades later, Blanco, Bullejos, and Faro~\cite{Blanco2005} studied non-Abelian groupoid extensions from a 2-categorical point of view;
their central result is the classification of non-Abelian groupoid extensions by means of a categorical cohomology theory for groupoids.
Of particular interest is also the article~\cite{CHEN20}, in which the authors study and classify fibrations of Lie groupoids.
In recent years, there has been a renewed interest in groupoid extensions due to the fact that such extensions lead to
many new and interesting algebraic structures (see, \eg,~\cite{ArClCoLiMcRa22,BuMe16,IoKuReiWi21,kumjian_1986,KwLiSk20,Ren80,Ren21} and the ref.~therein).

To illustrate the latter circumstance, let us consider a, possibly non-Abelian, extension of groupoids $\N \to \E \to \G$.
It is natural to ask whether the groupoid ring of $\E$ (resp. the groupoid C\Star algebra of $\E$) can be described in terms of data associated with the building blocks $\N$ and $\G$.
For groups this question has been studied by many authors (see, \eg,~\cite{OiWa22} and the ref.~therein) and leads to the class of \emph{group crossed products}, which is very well-understood and has numerous connections to geometry, operator algebras, and mathematical physics (see, \eg, \cite{AnGuIsRu22,Pass89,SchWa16} and the ref.~therein).
Furthermore, Renault~\cite[Prop.~1.22]{Ren80} proved that the groupoid C\Star algebra of a \emph{twist}, \ie, a groupoid extension by the trivial torus bundle, can be realized as a twisted groupoid C\Star algebra.
A~treatment of the most general case of a proper non-Abelian groupoid extension has, however, to the best of our knowledge not been worked out yet.

Our investigations naturally lead to the class of groupoid crossed products, which is, in contrast to the class of group crossed products, relatively new (cf.~\cite{CaLuPi21,OiLu12}) and thus provides fertile ground for further studies.
More precisely, we establish that each, possibly non-Abelian, groupoid extension $\N \to \E \to \G$ gives rise to a groupoid crossed product of $\G$ by the groupoid ring of $\N$ which recovers the groupoid ring of $\E$ up to isomorphism.
This also provides a natural class of examples of groupoid crossed products.
In addition, it is our hope that this work will contribute to the development and understanding of groupoid C\Star algebras and Steinberg algebras.

Here is an outline of this article.
In Section~\ref{sec:Prelim}, we provide the necessary foundations on groupoids, Abelian groupoid cohomology,
groupoid rings, and groupoid crossed products.
In Section~\ref{sec:extgrp}, we
develop a geometrically oriented
classification theory for non-Abelian extensions of groupoids by means of groupoid cohomology \`a la Westman (see,~\eg,~Corollary~\ref{cor:classext},
Corollary~\ref{cor:simplytransitiv}, and~Corollary~\ref{cor:charclass}).
We~wish to point out that our results, up to Theorem~\ref{teo:equivext}, are similar to the results obtained  by Blanco, Bullejos, and Faro in~\cite{Blanco2005}, but presented in a more geometric and computational framework.
Moreover, from Corollary~\ref{cor:simplytransitiv} and on, our investigation goes further.
In Section~\ref{sec:groupoidrings}, we study groupoid crossed products associated with groupoid extensions (see Theorem~\ref{teo:gpdringiso})
and show 
that the groupoid ring of a groupoid extension is isomorphic to a groupoid crossed product associated with the building blocks of the extension (see Corollary~\ref{cor:gpdringiso}).
We also extend our results to the realm of C\Star algebras (see Proposition~\ref{prop:cstar}).
In Section~\ref{sec:GroupoidCrossedProd}, we make use of the methods developed in Section~\ref{sec:extgrp} to provide a classification theory for groupoid crossed products
(see,~\eg,~Proposition~\ref{prop:equivcrossprod}
and
Theorem~\ref{thm:Classification}).

\section{Preliminaries}\label{sec:Prelim}
In this preliminary section we recall the most fundamental definitions and notation used throughout this article.
Henceforth, all rings are assumed to be associative.

\subsection{Groupoids}\label{sec:groupoids}

There are several ways to view groupoids. 
In this article we consider groupoids as objects with a geometric flavour. 
We refer the reader to \cite{mackenzie,Ren80,sims} for equivalent definitions as well as for examples.

By a \emph{groupoid} we mean a non-empty set $\G$ with a distinguished subset~$\GZ$, called the \emph{unit space} of $\G$, together with structure maps $r,s : \G \to\GZ$, called respectively the \emph{range} and the \emph{source} maps, a partial multiplication $(x,y) \mapsto xy$ in $\G$ defined on the set $\GD:=\{(x,y) \in \G \times \G : s(x) = r(y)\}$ of \emph{composable elements} of $\G$, and a map $\G \ni z \mapsto z^{-1} \in \G$, called \emph{inversion}, satisfying the following properties for all $x,y,z \in \G$ and $u\in \GZ$:
\enlargethispage{\baselineskip}
\begin{itemize}
    \item[(G1)] 
        $r(u) = u = s(u)$;
    \item[(G2)] 
        $r(z)z = z = z s(z)$;
    \item[(G3)] 
        $r(z^{-1}) = s(z)$ and $s(z^{-1}) = r(z)$;
    \item[(G4)] 
        $z^{-1}z = s(z)$ and $zz^{-1} = r(z)$;
    \item[(G5)] 
        $r(xy) = r(x)$ and $s(xy) = s(y)$ whenever $s(x) = r(y)$; 
    \item[(G6)] $(xy)z = x(yz)$ whenever $s(x) = r(y)$ and $s(y) = r(z)$.
\end{itemize}
To emphasize the unit space $\GZ$, we shall sometimes say that $\G$ is a groupoid over~$\GZ$.

Given a groupoid $\G$, we write $\G^{(n)}$ for the set of all $n$-tuples of composable elements of~$\G$, that is, $\G^{(n)}:=\{(x_1,\ldots,x_n)\in \G^{n} : s(x_i)=r(x_{i+1}), \, i=1,\ldots,n-1\}.$
We also bring to mind that a \emph{homomorphism} of groupoids $\G$ and $\HH$ is a map $\phi:\G\to \HH$ such that $\phi(xy)=\phi(x)\phi(y)$ for all $(x,y)\in \GD$ and $\phi(z^{-1})=\phi(z)^{-1}$ for all $z\in \G$. 
Note that each homomorphism $\phi:\G\to \HH$ satisfies $\phi(\GZ)\subseteq \HZ$, thus inducing a map $\phi^0:\GZ\to~\HZ$.
An \emph{isomorphism} of groupoids is simply a bijective homomorphism.

\subsection{Groupoid cohomology}\label{abeliancohomology}

We shall also be concerned with groupoid cohomology.
For convenience of the reader we briefly recall the basics of this theory.
For further reading we refer to~\cite[Sec.~1]{Ren80}.

Let $\C$ be a category and let $X$ be a non-empty set. 
A \textit{$\C$-bundle} over $X$ is a pair $(\N,p)$, where $\N$ is a non-empty set and $p:\N \rightarrow X$ is a map with the property that each fiber $N_u:=p^{-1}({u})$, $u\in X$, is an object of $\C$.
If $\C$ is the category of groups (resp. rings), then $\N$ is called a \emph{group (resp. ring) bundle}. 
In particular, we refer to $\N$ as \textit{Abelian} if each fiber $N_u$ is an Abelian group (resp. commutative ring).
We use the symbol
$\iso_\C(\N)$, or simply $\iso(\N)$, to denote the isomorphism groupoid of the $\C$-bundle $(\N,p)$.

Each group bundle carries a natural groupoid structure.
Indeed, let $X$ be a set and let $(\N,p)$ be a group bundle over $X$. 
For each $u \in X$ denote by $1_u$ the unit of the fiber~$N_u$ and 
put $\N^{(0)} := \{1_u : u\in X\}$.
Define the source and the range of $n\in \N$ to be equal to $1_{p(n)}$. 
Consider the partial multiplication and the inversion defined by the respective operations on the fibers $N_u$, $u \in X$. 
This turns $\N$ into a groupoid over $\N^{(0)}$.
Identifying $\N^{(0)}$ with $X$, in which case $p$ becomes the source and the range map, yields the claim.

Let $\G$ be a groupoid. 
A $\G$-\emph{module bundle} is a pair $((\A,p),L)$, where $(\A,p)$ is an Abelian group (or ring) bundle over $\GZ$ and $L$ is a $\G$-module structure on $\A$, that is, $L$ consists of a family $L_x : A_{s(x)} \rightarrow A_{r(x)}$, $x \in \G$, of group (or ring) isomorphisms such that $L_u=\operatorname{id}_{A_u}$ for all $u \in \GZ$ and $L_xL_y=L_{xy}$ whenever $(x,y)\in \GD.$

Let $((\A,p),L)$ be a $\G$-module bundle.
For $n \in \mathbb{N}_0$ an \emph{n-cochain} is a map $h:\G^{(n)}\rightarrow\A$ satisfying the following conditions:
\begin{enumerate}
    \item 
        $p\left(h(x_1,\ldots,x_n)\right)=r(x_1)$ for every $(x_1,\ldots,x_n) \in \G^{(n)}.$
    \item 
        If $n\geq 1$ and $x_i\in \GZ$ for some $i\in \{1,\ldots,n\}$, then $h(x_1,\ldots,x_n)\in \GZ.$
\end{enumerate}
We denote by $C^n(\G,\A)$ the set of $n$-cochains and define $d^0_L:C^0(\G,\A)\rightarrow C^{1}(\G,\A)$ by 
\begin{align*}
    d^0_L(h)(x):=L_x\left(h(s(x))\right)-h(r(x)).
\end{align*}
For $n > 0$ we consider the map $d^n_L: C^n(\G,\A) \rightarrow C^{n+1}(\G,\A)$ given by
\begin{align*}
    d^n_L(h)(x_1,\ldots,x_{n+1}) &:= L_{x_1}(h(x_2,\ldots,x_{n+1}))
+\sum_{i=1}^{n}(-1)^{n}h(x_1,\ldots,x_ix_{i+1},\ldots,x_{n+1})
    \\
    & \quad +(-1)^{n+1}h(x_1,\ldots,x_n).
\end{align*}
This gives a chain complex $\left(C^n(\G,\A),d^n_L\right)_{n\in \mathbb{N}_0}$.
For $n\in \mathbb{N}_0$ we write $Z^n(\G,\A)_L$ 
for the $n$-\emph{cocycles}, $B^n(\G,\A)_L$
for the $n$-\emph{coboundaries}, and $H^n(\G,\A)_L:=Z^n(\G,\A)_L/ B^n(\G,\A)_L$ for the $n$-th \emph{cohomology group}.

\subsection{Groupoid rings}

Let $\G$ be a groupoid and let $R$ be a unital ring.
We recall that the \textit{groupoid ring} $R[\G]$ is the set of all finitely supported functions $f:\G\rightarrow R$ endowed with the addition given by taking the pointwise sum and the product given by $$(fg)(z) := \sum_{xy=z}f(x)g(y).$$

For a finite subset $F\subseteq\G$, we let $\delta_F\in R[\G]$ stand for the corresponding characteristic function.
In particular, for $F=\{x\}$ we simply write $\delta_x$.

\subsection{Groupoid crossed products}\label{sec:groupoidcrossprod}

In what follows, we recall the foundations on groupoid crossed products (cf.~\cite{CaLuPi21,OiLu12}).

\begin{defi}
Let $\G$ be a groupoid and let $S$ be a ring.
We say that $S$ is \emph{$\G$-graded} if there are additive subsets $S_x$ of $S$, for $x\in \G$, such that
$S = \oplus_{x\in \G} S_x$ and
$S_x S_y \subseteq S_{xy}$ if $(x,y) \in \GD$ and $S_x S_y = \{0\}$ otherwise.
\end{defi}

\begin{defi}
A $\G$-graded ring $S$ is \textit{object unital} if 
for all $u \in \GZ$ the ring $S_u$ is unital, and for all $x \in \G$ and all 
$r \in S_{x}$ the equalities $1_{S_{r(x)}} r = r 1_{S_{s(x)}} = r$ hold.
\end{defi}

\begin{defi}[\cf~{\cite[Def.~10 and Def.~12]{CaLuPi21}}]
Let $\G$ be a groupoid and let $S$ be a $\G$-graded ring which is object unital.
\begin{enumerate}[{\normalfont \rmfamily (i)}]
    \item
        We put $S_0 := \bigoplus_{u \in \GZ} S_u$ and consider $S_0$ as a $\G$-graded ring as follows:
        If $x \in \G$, then $(S_0)_x = S_x$, if $x \in \GZ$, and $(S_0)_x = \{0\}$, otherwise. 
    \item 
        We say that a homogeneous element $r\in S_x$ is \textit{object invertible} if there exists $s\in S_{x^{-1}}$ such that $sr=1_{S_{s(x)}}$ and $rs=1_{S_{r(x)}}$. We denote by $S_{\text{gr}}^\times$ the set of all object invertible elements of $S$.
    \item
        We say that $S$ is a \emph{$\G$-crossed product} if for all $x \in \G$ the relation $S_{\text{gr}}^\times \bigcap S_x \neq \emptyset$ holds. By \cite[Prop.~7(iv)]{CaLuPi21}, all object crossed products are strongly graded,
        \ie, $S_x S_y = S_{xy}$ for all $(x,y)\in \GD$.
        
\end{enumerate}
\end{defi}

\begin{defi}
Let $\G$ be a groupoid, let $\R$ be a unital ring bundle over $\GZ$, and let $R$ be the ring $\bigoplus_{u \in \GZ} R_u$.
\begin{enumerate}[{\normalfont \rmfamily  (i)}]
    \item
        We call a $\G$-crossed product $S$ a \emph{$\G$-crossed product over $R$} if $S_0 =R$.
    \item
        Two $\G$-crossed products $S$ and $S'$ over $R$ are called \emph{equivalent} if there exists a graded isomorphism $\phi: S \to S'$ such that $\phi\lvert_{R} = \id_{R}$.
    \item 
        We let $\Ext(\G,\R)$ stand for the set of all equivalence classes of $\G$-crossed products over $R$.        
        Given a $\G$-crossed product $S$ over $R$, we write $[S]$ for its class in $\Ext(\G,\R)$.
\end{enumerate}
\end{defi}

\begin{defi}[\cf~{\cite[Def.~13]{CaLuPi21}}]\label{def:facsysring}
Let $\G$ be a groupoid, let $\R$ be a unital ring bundle over~$\GZ$, and consider the induced group bundle $\R^\times$ over $\GZ$ given by $\R^\times := \bigcup\limits_{u \in \GZ} R_u^\times$.
\begin{enumerate}[{\normalfont \rmfamily  (i)}]
    \item
        We define $C^1(\G,\iso(\R))$ as the set of all families of maps $\{M_x: R_{s(x)}\rightarrow R_{r(x)}\}_{x\in \G}$ of ring isomorphisms such that $M_u=\operatorname{id}_{R_u}$ for all $u \in \GZ$.
    \item
        We write $C^2(\G,\R^\times)$ for the set of all maps $\tau:\GD\rightarrow \R^\times$ such that $\tau(x,y) \in R_{r(x)}^\times$ for all $(x,y) \in \GD$ and $\tau(x,s(x))=\tau(r(x),x)=r(x)$ for all $x \in \G$.
    \item\label{cond:facsysring}
        We call a pair $(M,\tau) \in C^1(\G,\iso(\R)) \times C^2(\G,\R^\times)$ a \emph{factor system} for $(\G,\R)$ if the following conditions are satisfied:
        \begin{enumerate}
            \item[(C1)]
                $M_xM_y(n) = \tau(x,y)M_{xy}(n)\tau(x,y)^{-1}$ for all $(x,y)\in \GD$ and $n \in R_{s(y)}$,
            \item[(C2)]
            $\tau(x,y)\tau(xy,z)=M_x(\tau(y,z))\tau(x,yz)$ for all $(x,y,z) \in \G^{(3)}$.
        \end{enumerate}
    \item
        We let $Z^2(\G,\R)$ stand for the set of all factor systems for $(\G,\R)$.
\end{enumerate}
\end{defi}

\begin{prop}[\cf~{\cite[Def.~14~and~Prop.~16]{CaLuPi21}}]\label{def:crossedproduct}
Let $\G$ be a groupoid and let $\R$ be a unital ring bundle over~$\GZ$.
For a factor system $(M,\tau)$ for $(\G,\R)$ let $\R \times_{(M,\tau)} \G$ be the set of all functions $f:\G\rightarrow \R$ with finite support satisfying $p\circ f=r$. Then $\R \times_{(M,\tau)} \G$ becomes an associative ring when equipped with the pointwise sum and the product $$\big(fg\big)(z):=\sum_{xy=z}f(x)M_x(g(y))\tau(x,y).$$
Moreover, $\R \times_{(M,\tau)} \G$ is a $\G$-graded ring which is a $\G$-crossed product over $\R$. 
Conversely, any $\G$-crossed product over $\R$ can be presented in this way.
\end{prop}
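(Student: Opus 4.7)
The proof splits into the forward and converse directions. For the forward direction, I first verify that $fg$ is well-defined: finite support of $fg$ is inherited from $f$ and $g$, and $p((fg)(z)) = r(z)$ holds because $M_x(g(y)) \in R_{r(x)}$ and $\tau(x,y) \in R_{r(x)}^\times$, so each summand lies in $R_{r(z)}$. Distributivity is routine, so associativity is the heart of the matter. Expanding $((fg)h)(z)$ and $(f(gh))(z)$ as double sums and reindexing both over triples $(a,b,y) \in \G^{(3)}$ with $aby = z$, the comparison reduces term-by-term (using that $M_a$ is a ring homomorphism) to the identity
\[
\tau(a,b)\, M_{ab}(h(y))\, \tau(ab,y) \;=\; M_a\bigl(M_b(h(y))\bigr)\, M_a(\tau(b,y))\, \tau(a,by)
\]
in the fiber $R_{r(a)}$. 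Applying (C1) to rewrite $M_a M_b(h(y)) = \tau(a,b)\, M_{ab}(h(y))\, \tau(a,b)^{-1}$ reduces this in turn to (C2) in the form $\tau(a,b)\tau(ab,y) = M_a(\tau(b,y))\tau(a,by)$, closing the computation.

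Next, setting $S_x := \{f : \operatorname{supp}(f) \subseteq \{x\}\}$ gives a $\G$-grading, since the product formula forces $S_x S_y \subseteq S_{xy}$ when $(x,y)\in \GD$ and $S_x S_y = \{0\}$ otherwise (no composable factorizations exist). Object unitality follows from $M_{r(x)} = \id$ together with the normalization $\tau(r(x),x) = \tau(x,s(x)) = r(x) = \one_{R_{r(x)}}$. Because $S_0 = \R$ by construction, only the $\G$-crossed product property remains: for each $x \in \G$ the delta function $\delta_x \in S_x$ with $\delta_x(x) = \one_{R_{r(x)}}$ is object invertible, with inverse in $S_{x^{-1}}$ given by the function taking the value $M_{x^{-1}}(\tau(x,x^{-1})^{-1})$ at $x^{-1}$; the two required identities follow from (C1) and (C2) applied to the triples $(x,x^{-1},x)$ and $(x^{-1},x,x^{-1})$.

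For the converse, let $S$ be a $\G$-crossed product over $\R$. For each $x \in \G$ choose an object invertible $u_x \in S_x$, normalized so that $u_u = \one_{R_u}$ for $u \in \GZ$, and let $v_x \in S_{x^{-1}}$ be the object inverse of $u_x$. Define $M_x(r) := u_x r v_x$ for $r \in R_{s(x)}$; the grading and object unitality ensure that $M_x : R_{s(x)} \to R_{r(x)}$ is a ring isomorphism with $M_u = \id_{R_u}$. Define $\tau(x,y) := u_x u_y v_{xy} \in R_{r(x)}^\times$, equivalently by the relation $u_x u_y = \tau(x,y) u_{xy}$. Conditions (C1) and (C2) then follow from associativity in $S$ together with the two identities $u_x u_y = \tau(x,y) u_{xy}$ and $M_x(r) u_x = u_x r$. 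Finally, the map $\Phi(f) := \sum_x f(x) u_x$ defines a $\G$-graded ring isomorphism $\R \times_{(M,\tau)} \G \to S$ fixing $\R$.

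The principal obstacle is the associativity computation in the forward direction: both (C1) and (C2) must be invoked in precisely the right order, and the non-commutativity of the fiber rings $R_{r(a)}$ requires that cancellations be carried out carefully rather than by bulk simplification. The remaining verifications, as well as the converse, are largely bookkeeping once the associativity identity is secured and the choice of a section $x \mapsto u_x$ is fixed.
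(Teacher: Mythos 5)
The paper does not prove this proposition at all: it is imported verbatim from the cited reference \cite[Def.~14 and Prop.~16]{CaLuPi21}, so there is no in-paper argument to compare against. Your proof is correct and is the standard one: the associativity identity $\tau(a,b)M_{ab}(h(y))\tau(ab,y)=M_a(M_b(h(y)))M_a(\tau(b,y))\tau(a,by)$ reduces exactly to (C1) followed by (C2), your object inverse of $\delta_x$ checks out (its value $M_{x^{-1}}(\tau(x,x^{-1})^{-1})$ equals $\tau(x^{-1},x)^{-1}$ by (C2) applied to $(x^{-1},x,x^{-1})$), and the converse via a normalized choice of object invertible elements $u_x$ with $u_xu_y=\tau(x,y)u_{xy}$ and $M_x(r)u_x=u_xr$ is the expected argument.
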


\begin{rem}\label{rem:identities}
Let $\G$ be a groupoid, let $\R$ be a unital ring bundle over~$\GZ$, and let $(M,\tau)$ be a factor system for $(\G,\R)$.
For all $x,y,z\in \G$ such that $xy=z$ the following identities hold:
\begin{align}
    \tau(x,x^{-1})&=M_x\left(\tau(x^{-1},x)\right),\label{eq:apb1}
    \\
   \tau(z,y^{-1})&=\tau(x,y)^{-1}M_x\left(\tau\left(y,y^{-1}\right)\right),\label{eq:apb2}
    \\
    \tau(z,y^{-1})\tau(x,x^{-1})&=M_z\left(\tau(y^{-1},x^{-1})\right)\tau(z,z^{-1}),\label{eq:apb3}
    \\
    \tau(z,y^{-1})M_x(n)&=M_{z}\left(M_{y^{-1}}(n)\right) \tau(z,y^{-1}), \qquad n\in R_{s(x)}.\label{eq:apb4}
\end{align}
\end{rem}

\section{Non-Abelian extensions of groupoids and their classification}\label{sec:extgrp}

In this section we develop a geometrically oriented classification theory for non-Abelian extensions of groupoids in the spirit of Schreier, Baer, and Eilenberg-Mac Lane.

Throughout the following let $\G$ be a groupoid and let $(\N,p)$ be a group bundle over~$\GZ$, which we shall consider as a groupoid over $\GZ$ with respect to its natural groupoid structure described in Section~\ref{abeliancohomology}.

\begin{defi}
    A \emph{groupoid extension} of $\G$ by $\N$ is a surjective homomorphism \linebreak $j: \E \to \G$, where $\E$ is a groupoid over $\GZ$, $j^{0}$ is the identity map on $\GZ$ and $\N = \ker(j)$, \ie, the set of elements $e \in \E$ such that $j(e) \in \GZ$.
    Usually, we shall write $$\N \to \E \stackrel{j}{\to} \G$$ to denote a groupoid extension of $\G$ by $\N$.

\begin{enumerate}[{\normalfont \rmfamily  (i)}]
    \item
        We call two groupoid extensions $\N \to \E \stackrel{j}{\to} \G$ and $\N \to \E' \stackrel{j'}{\to} \G$ of $\G$ by $\N$ $\emph{equivalent}$ if there exists a groupoid homomorphism 
        $\phi: \E \to \E'$ such that the following diagram commutes:
        \begin{align*}
            \xymatrix{\N \ar[r] \ar[d]^{\id_\N} & \E \ar[r]^j \ar[d]^{\phi}& \G \ar[d]^{\id_\G} \\ \N \ar[r] & \E' \ar[r]^{j'} & \G}
        \end{align*}
        It is easily seen that any such $\phi$ is, in fact, an isomorphism of groupoids.
        We shall sometimes say that such a map $\phi$ is an \emph{equivalence} of groupoid extensions.
    \item
        We denote by $\Ext(\G,\N)$ the set of all equivalence classes of groupoid extensions of $\G$ by $\N$.        
        Given an extension $\E$ of $\G$ by $\N$, we write $[\E]$ for its class in $\Ext(\G,\N)$.
\end{enumerate}
\end{defi}

\begin{rem}
     Note that all groupoids involved in a groupoid extension necessarily have the same unit space.
\end{rem}

\begin{example}\label{ex:fundamentalgroupoid}
    Let $X$ be a path-connected space.
    From homotopy classes of paths in~$X$ we obtain the \emph{fundamental groupoid of $X$}, denoted $\Pi(X)$, 
    which may be considered as a groupoid extension
    \begin{align*}
        \bigcup_{x \in X} \pi_1(X,x) \to \Pi(X) \stackrel{j}{\to} X \times X
    \end{align*}
    of the pair groupoid $X \times X$ by the group bundle $\bigcup_{x \in X} \pi_1(X,x)$, where $j$ is the map that takes a homotopy class of a path $\gamma : [0,1] \to X$ to the pair $(\gamma(1),\gamma(0))$ and $\pi_1(X,x)$ stands for the fundamental group of $X$ with respect to the base point $x \in X$.
\end{example}

\begin{example}\label{ex:grpbundles}
Another geometrically oriented example of a groupoid extension is given as follows:
Let $q: P \to X$ be a locally trivial principal bundle with structure group $G$ and consider the natural action of $G$ on $P \times G$ given by $(p,g).h := (p.h,h^{-1}g)$ for $p \in P$ and $g,h \in G$.
The corresponding quotient $C_G(P):=
(P \times G)/G$ is a group bundle over $X$, the so-called \emph{conjugation bundle}, which is of particular interest in gauge theory, because its space of sections is isomorphic to the gauge group of the principal bundle.
Now, let $N \to E \stackrel{\pi}{\to} G$ be a short exact sequence of, possibly non-Abelian, Lie groups.
Furthermore, suppose that there exists a locally trivial principal bundle $q': P' \to X$ with structure group $E$  such that $P'/N \cong P$.
Then we obtain a short exact sequence of the corresponding conjugation bundles
\begin{align*}
    C_N(P') \to C_E(P') \stackrel{j}{\to} C_G(P) \qquad \text{with} \qquad j([(p',e)]):=[([p'],\pi(e))],
\end{align*}
and therefore an extension of groupoids.
By passing over to the corresponding spaces of sections we get a short exact sequence of gauge groups.
A particular simple example of the above situation is given in case of a trivial principal bundle $q_X: X \times G \to X$, $q_X(x,g) = x$.
We may then look at $q'_X: X \times E \to X$, $q'_X(x,e) = x$, which in turn leads to the following extension of group bundles over $X$:
\begin{align*}
    X \times N \to X \times E \stackrel{j}{\to} X \times G \qquad \text{with} \qquad j(x,e):= (x,\pi(e)).
\end{align*}
\end{example}

\begin{example}\label{ex:twist}
Let $\G$ be a groupoid over $\GZ$.
A \textit{twist} of $\G$ is a groupoid extension of $\G$ by the trivial group bundle $\GZ \times \mathbb{T}$.
Twists and their applications to operator algebras and related fields have recently regained major interest (see, \eg, \cite{kumjian_1986,raeburn1985continuous,Ren80}). 
\end{example}

\begin{example}
    Let $\G$ be a groupoid.
    In \cite[Chap.~1]{mackenzie} the author introduces the notion of a \emph{normal subgroupoid} of $\G$ and of the corresponding quotient groupoid $\G / \N$ of $\G$ by $\N$ with projection map $\text{pr}:\G \to \G / \N$. 
    In particular, each normal subgroupoid $\N$ of $\G$ yields a groupoid extension of the form $\N \to \G \stackrel{\text{pr}}{\to} \G/ \N$.
\end{example}

We proceed to give a description of non-Abelian groupoid extensions in terms of factor systems in analogy with the classical theory of non-Abelian group extensions (see, \eg,~ \cite[Chap.~4]{maclane2012homology}).
 
\begin{defi}\label{def:facsys}
\begin{enumerate}[{\normalfont \rmfamily  (i)}]
    \item
        We define $C^1(\G,\iso(\N))$ to be the set of all families of group isomorphisms $\{L_x: N_{s(x)}\rightarrow N_{r(x)}\}_{x\in \G}$ such that $L_u=\operatorname{id}_{N_u}$ for all $u \in \GZ$.
    \item
        We write $C^2(\G,\N)$ for the set of all maps $\sigma:\GD\rightarrow \N$ such that $\sigma(x,y) \in N_{r(x)}$ for all $(x,y) \in \GD$ and $\sigma(x,s(x))=\sigma(r(x),x)=r(x)$ for all $x \in \G$.
    \item\label{cond:facsys}
        We call a pair $(L,\sigma) \in C^1(\G,\iso(\N)) \times C^2(\G,\N)$ a \emph{factor system} for $(\G,\N)$ if the following conditions are satisfied:
        \begin{enumerate}
            \item[(F1)]
                $L_xL_y(n) = \sigma(x,y)L_{xy}(n)\sigma(x,y)^{-1}$ for all $(x,y)\in \GD$ and $n \in N_{s(y)}$,
            \item[(F2)]
               $\sigma(x,y)\sigma(xy,z)=L_x(\sigma(y,z))\sigma(x,yz)$ for all $(x,y,z) \in \G^{(3)}$.
        \end{enumerate}
        We shall refer to Condition~(\hyperref[cond:facsys]{F1}) as the~\emph{twisted action condition} and to Condition~(\hyperref[cond:facsys]{F2}) as the \emph{twisted cocycle condition}.
    \item
        We let $Z^2(\G,\N)$ stand for the set of all factor systems for $(\G,\N)$.
\end{enumerate}
\end{defi}

\begin{rem}
For fixed $L \in C^1(\G,\iso(\N))$ we denote 
by $Z^2(\G,\N)_L$ the set of all elements $\sigma \in C^2(\G,\N)$ satisfying Condition~(\hyperref[cond:facsys]{F1}) and Condition~(\hyperref[cond:facsys]{F2})  
in Definition~\ref{def:facsys}.
Note that we may then write $Z^2(\G,\N)$ as the disjoint union $$Z^2(\G,\N) = \bigcup_L Z^2(\G,\N)_L,$$ which explains the shift in notation from 2-cocycles $\sigma$ as functions to pairs $(L,\sigma)$.
If $\N$ is a bundle of Abelian groups one can fix $L$ and deal with each set $Z^2(\G,\N)_L$ separately, but that is not possible for bundles of non-Abelian groups.
\end{rem}

The purpose of the following example is to show that every groupoid extension of $\G$ by $\N$ admits a factor system for $(\G,\N)$.
There and subsequently, we use the notation
\begin{align*}
    \N \times_{(p,r)} \G := \{(n,x)\in \N \times \G : p(n)=r(x)\}.
\end{align*}

\begin{example}\label{ex:facsys}
Let $\N \to \E \stackrel{j}{\to} \G$ be a groupoid extension of $\G$ by $\N$.
Furthermore, let $k:\G\rightarrow \E$ be a normalized section for $j$, \ie, $j\circ k = \operatorname{id}_\G$ and $k\lvert_{\GZ} = \operatorname{id}_{\GZ}$.
Then 
\begin{align}
    \phi: \N \times_{(p,r)} \G \to \E, \qquad (n,x) \mapsto nk(x)
\end{align}
is a bijection.
\begin{proof}[Proof of the claim]
For each $x\in \E$ we have $x=xk(j(x))^{-1}k(j(x))$ and $xk(j(x))^{-1}\in \N$, the latter due to the section property. 
This shows that $\phi$ is surjective. 
To establish its injectivity, we assume that $nk(x)=mk(y)$ for some $n,m \in \N$ and $x,y\in \G$. 
Applying $j$ then gives $x=y$, and further $n=m$ by cancellation.
\end{proof}
Now, each $x\in \G$ defines a group isomorphism 
\begin{align*}
    L_x: N_{s(x)} \to N_{r(x)}, \qquad n \mapsto k(x)nk(x)^{-1}.
\end{align*}
Furthermore, the bijectivity of the map $\phi$ implies that $j^{-1}({x})=N_{r(x)}k(x)$ for all $x\in \G$.
Since $j(k(x)k(y))=xy$ for every $(x,y) \in \GD$, we conclude that there exists a unique element $\sigma(x,y)\in N_{r(x)}$ such that 
\begin{equation}\label{kcocycle}
    k(x)k(y)=\sigma(x,y)k(xy).
\end{equation}
This gives a map $\sigma:\GD\rightarrow \N$ with $\sigma(x,y) \in N_{r(x)}$ for all $(x,y) \in \GD$.
These maps are related as follows: for all $(x,y)\in \GD$ and $n\in N_{s(y)}$ we have
\begin{align*}
    L_xL_y(n)
    &= k(x)k(y)n(k(x)k(y))^{-1}
    \\
    &=\sigma(x,y)k(xy)nk(xy)^{-1}\sigma(x,y)^{-1}
    \\
    &=\sigma(x,y)L_{xy}(n)\sigma(x,y)^{-1}.
\end{align*}
Also, associativity entails that $\left(k(x)k(y)\right)k(z)=k(x)\left(k(y)k(z)\right)$ for all $(x,y,z)\in \G^{(3)}$.
The left-hand side is equal to $\sigma(x,y)\sigma(xy,z)k(xyz)$, while
the right-hand side yields
\begin{align*}
    k(x)\left(k(y)k(z)\right)
    =k(x)\sigma(y,z)k(yz)
    =L_x(\sigma(y,z))\sigma(x,yz)k(xyz).
\end{align*}
Consequently, $\sigma(x,y)\sigma(xy,z)=L_x(\sigma(y,z))\sigma(x,yz)$ for all $(x,y,z) \in \G^{(3)}$ by cancellation.
Finally, the fact that the section $k$ is normalized makes it obvious that $(L,\sigma) \in C^1(\G,\iso(\N)) \times C^2(\G,\N)$.
Hence $(L,\sigma)$ is a factor system for $(\G,\N)$.
\end{example}

\begin{rem}
    Let us look again at Example~\ref{ex:fundamentalgroupoid}.
    A (normalized) section of the map $j$, and hence a factor system for $(X \times X, \bigcup_{x \in X} \pi_1(X,x))$, is easily constructed by suitably choosing for each pair $(x,y) \in X \times X$ a path $\gamma: [0,1] \to X$ connecting $x$ and $y$.
\end{rem}

\begin{prop}
Let $(L,\sigma)$ be a factor system for $(\G,\N)$.
Then $\N \times_{(p,r)} \G$ becomes a groupoid over $\{(u,u)\mid u \in \GZ\}\cong\GZ$ equipped with the following structure maps:
\begin{enumerate}[{\normalfont \rmfamily  (i)}]
    \item 
        The source and the range maps are given by $s(n,x):=s(x)$ and $r(n,x):=r(x)$, respectively.
        In particular, two elements $(n,x)$ and $(m,y)$ in $\N \times_{(p,r)} \G$ are composable if and only if $x$ and $y$~are.
    \item
        For $s(n,x)=r(m,y)$ the product is given by $(n,x)(m,y) := (nL_x(m)\sigma(x,y),xy)$.
    \item
        The inversion is given by $(n,x)^{-1} := \left(\sigma\left(x^{-1},x\right)^{-1}L_{x^{-1}}\left(n^{-1}\right),x^{-1}\right)$.
\end{enumerate}
We write $\N \times_{(L,\sigma)} \G$ for the set $\N \times_{(p,r)} \G$ endowed with the above groupoid structure.
\end{prop}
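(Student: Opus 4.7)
The plan is to verify the six groupoid axioms (G1)--(G6) directly for the proposed structure on $\N \times_{(p,r)} \G$. First I~would check that the operations are well-defined in the sense that their outputs lie in the correct fibers: whenever $s(x) = r(y)$ the element $n L_x(m) \sigma(x,y)$ lies in $N_{r(x)} = N_{r(xy)}$, and $\sigma(x^{-1},x)^{-1} L_{x^{-1}}(n^{-1})$ lies in $N_{s(x)} = N_{r(x^{-1})}$. Both are immediate from $L_x \colon N_{s(x)} \to N_{r(x)}$ being a group isomorphism and the fiber convention on $\sigma$ from Definition~\ref{def:facsys}.

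Once this bookkeeping is done, axioms (G1), (G3) and (G5) are essentially tautological and follow by simply unwrapping the definitions of the source, range, product, and inverse. Axiom (G2) is also quick, using only the normalization $L_{r(x)} = \id_{N_{r(x)}}$ together with $\sigma(r(x),x) = r(x) = \sigma(x,s(x))$ from Definition~\ref{def:facsys}.

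The substance of the proof resides in axioms (G4) and (G6), where the twisted action condition~(F1) and the twisted cocycle condition~(F2) finally come into play. For associativity (G6), I~would expand both bracketings of $(n,x)(m,y)(\ell,z)$, cancel the common prefix $n L_x(m)$ in the $\N$-coordinate, and reduce the problem to the identity
\begin{equation*}
    L_x(L_y(\ell))\, L_x(\sigma(y,z))\, \sigma(x,yz) \;=\; \sigma(x,y)\, L_{xy}(\ell)\, \sigma(xy,z),
\end{equation*}
which I~would establish by rewriting the outermost $L_x L_y(\ell)$ via~(F1) and then collapsing the resulting product of $\sigma$-terms via~(F2).

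For (G4) I~would first derive the auxiliary identity $\sigma(x,x^{-1}) = L_x(\sigma(x^{-1},x))$ by specializing (F2) to the triple $(x, x^{-1}, x)$ and invoking normalization (so that $\sigma(r(x),x)$ and $\sigma(x,s(x))$ both collapse to the identity of $N_{r(x)}$). The left-inverse equation $(n,x)^{-1}(n,x) = (s(x),s(x))$ then follows immediately from $L_{x^{-1}}(n^{-1}) L_{x^{-1}}(n) = L_{x^{-1}}(s(x)) = s(x)$. The right-inverse equation $(n,x)(n,x)^{-1} = (r(x),r(x))$ is the subtlest point: it requires combining the instance $L_x L_{x^{-1}}(n^{-1}) = \sigma(x,x^{-1})\, n^{-1}\, \sigma(x,x^{-1})^{-1}$ of~(F1) with the auxiliary identity, whereupon everything telescopes to the identity element $r(x)$ of $N_{r(x)}$. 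This last verification is where I~expect the main bookkeeping difficulty to lie, but it is a finite computation once the auxiliary identity is in hand.
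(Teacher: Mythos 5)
Your proposal is correct and follows essentially the same route as the paper: the auxiliary identity $\sigma(x,x^{-1})=L_x(\sigma(x^{-1},x))$ obtained from (F2) at $(x,x^{-1},x)$, the use of (F1) to telescope the right-inverse computation, and the reduction of associativity to a single identity resolved by (F1) and (F2) all match the paper's argument. (Only a trivial slip: in the left-inverse step the relevant unit is $n^{-1}n=1_{r(x)}$, so you should write $L_{x^{-1}}(r(x))=s(x)$ rather than $L_{x^{-1}}(s(x))$.)
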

\begin{proof}
(G1)-(G3) and (G5) in Section~\ref{sec:groupoids} are easily checked. 
Here, we just focus on (G4) and (G6). 
Applying the twisted cocycle condition to the triple $(x, x^{-1},x)$ gives $\sigma(x,x^{-1})=L_{x}(\sigma(x^{-1},x)),$ and hence for $(n,x)\in \N \times_{(p,r)} \G$, we have 
\begin{align*}
    (n,x)(n,x)^{-1}
    &=\left(nL_x\left(\sigma(x^{-1},x)^{-1}\right)L_x\left(L_{x^{-1}}(n^{-1})\right)\sigma(x,x^{-1}),r(x)\right)
    \\
    &=\left(nL_x\left(\sigma(x^{-1},x)^{-1}\right) \sigma(x,x^{-1})n^{-1},r(x)\right)=(nn^{-1},r(x))
    =(r(x),r(x)).
\end{align*}
Similarly, we get $(n,x)^{-1}(n,x)=(s(x),s(x)).$ 
Next, let $(n,x),(m,y),\ (l,z)\in \N \times_{(p,r)} \G$ be such that $(x,y,z)\in \G^{(3)}$.
Then a straightforward computation yields
\begin{align*}
    \big((n,x)(m,y)\big)(l,z)
    &=(nL_x(m)\sigma(x,y)L_{xy}(l)\sigma(x,y)^{-1}\sigma(x,y)\sigma(xy,z),xyz)
    \\
    &=(nL_x(m)L_x(L_y(l))L_x(\sigma(y,z))\sigma(x,yz),xyz)
    \\
    &=(nL_x\big(mL_y(l)\sigma(y,z)\big)\sigma(x,yz),xyz)
    = (n,x)\big((m,y)(l,z)\big).
    \qedhere
\end{align*}
\end{proof}

Summarizing, we get the following result:

\begin{cor}
$\N \times_{(L,\sigma)} \G$ is a groupoid extension of $\G$ by $\N$ for any factor system $(L,\sigma)$ for $(\G,\N)$.  
\end{cor}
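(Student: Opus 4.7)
The plan is to exhibit the natural projection
\[
j : \N \times_{(L,\sigma)} \G \to \G, \qquad (n,x) \mapsto x,
\]
and to identify its kernel with $\N$; the preceding proposition already gives that $\N \times_{(L,\sigma)} \G$ is a groupoid over $\GZ$, so the only remaining content of the corollary is that the triple
\[
\N \;\longrightarrow\; \N \times_{(L,\sigma)} \G \;\xrightarrow{j}\; \G
\]
meets the requirements of the definition of a groupoid extension.

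First I would verify that $j$ is a surjective groupoid homomorphism whose restriction to the unit space is the identity. Surjectivity is clear since $(r(x),x) \mapsto x$; the homomorphism property is immediate from the composition and inversion formulas of the preceding proposition, which agree with those of $\G$ after discarding the first coordinate. Under the identification $\GZ \cong \{(u,u) : u \in \GZ\}$, the map $j^0$ is literally the identity.

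Next I would compute $\kker(j)$. An element $(n,x)$ belongs to $\kker(j)$ precisely when $x \in \GZ$; writing $x = u$, the fibre condition $p(n) = r(u) = u$ forces $n \in N_u$, so
\[
\kker(j) = \{(n, p(n)) : n \in \N\},
\]
and the map $n \mapsto (n, p(n))$ is a bijection onto $\kker(j)$. To see that this bijection respects the groupoid structure, I would check that for $n, m \in N_u$ one has
\[
(n,u)(m,u) = (n\, L_u(m)\, \sigma(u,u),\, u) = (nm,\, u),
\]
which uses the normalization $L_u = \id_{N_u}$ (from the definition of $C^1(\G,\iso(\N))$) and $\sigma(u,u) = u$ (from the normalization condition $\sigma(r(x),x) = r(x)$ applied to $x = u$); the compatibility with inversion is similar, using (F2) applied to $(u,u,u)$.

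The principal, though mild, obstacle is purely book-keeping: pedantically matching everything against the definition of a groupoid extension under the identifications $\GZ \cong \{(u,u)\}$ and $\N \cong \kker(j)$, without letting the notation obscure the fact that each required condition collapses to a one-line consequence of the defining formulas together with the normalization conditions in Definition~\ref{def:facsys}.
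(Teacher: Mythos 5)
Your proposal is correct and fills in exactly the routine verification that the paper leaves implicit: it states the corollary as an immediate summary of the preceding proposition, and your check that $j(n,x)=x$ is a surjective homomorphism with $j^0=\id_{\GZ}$ and kernel $\{(n,p(n)):n\in\N\}\cong\N$ is the intended argument. One trivial quibble: compatibility of the kernel identification with inversion already follows from $L_u=\id_{N_u}$ and $\sigma(u,u)=1_u$, so no appeal to~(F2) is needed there.
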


\begin{prop}\label{prop:equivalence}
Let $\N \to \E \stackrel{j}{\to} \G$ be a groupoid extension of $\G$ by $\N$.
Furthermore, let $k:\G\rightarrow \E$ be a normalized section for $j$, \ie, $j\circ k = \operatorname{id}_\G$ and $k\lvert_{\GZ} = \operatorname{id}_{\GZ}$, and let 
$(L,\sigma)$ be the associated factor system.
Then $\N \times_{(L,\sigma)} \G$ and $\E$ are equivalent groupoid extensions via the map $\phi: \N \times_{(L,\sigma)} \G \to \E$ given by $(n,x) \mapsto nk(x)$.
\end{prop}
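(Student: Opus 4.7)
The map $\phi$ has already been shown to be a bijection in Example~\ref{ex:facsys}, so the remaining work is to verify that $\phi$ is a groupoid homomorphism and that it fits into the required commutative diagram. The plan is to leverage the two defining identities from Example~\ref{ex:facsys}, namely
\begin{align*}
    L_x(n) = k(x)\,n\,k(x)^{-1} \qquad \text{and} \qquad k(x)k(y) = \sigma(x,y)\,k(xy),
\end{align*}
to reduce everything to manipulations inside $\E$.

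For multiplicativity, I would take a composable pair $(n,x),(m,y)$ in $\N \times_{(L,\sigma)} \G$ and compute
\begin{align*}
    \phi\bigl((n,x)(m,y)\bigr)
    &= \phi\bigl(n L_x(m)\sigma(x,y),\, xy\bigr)
    = n\, L_x(m)\, \sigma(x,y)\, k(xy)\\
    &= n\, k(x)\, m\, k(x)^{-1}\, k(x)\, k(y)
    = (nk(x))(mk(y))
    = \phi(n,x)\,\phi(m,y),
\end{align*}
where the third equality uses both identities above. Since $\phi$ is already bijective, a standard groupoid argument then gives inversion compatibility automatically: once $\phi$ is multiplicative and sends units to units (which is transparent, since $\phi(u,u) = u\cdot k(u) = u$ for $u \in \GZ$ by normalization of $k$), the relation $\phi(z^{-1})\phi(z) = \phi(s(z)) = s(\phi(z))$ and the analogous right-handed identity force $\phi(z^{-1}) = \phi(z)^{-1}$. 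Alternatively, one can verify inversion directly by substituting $(n,x)^{-1} = (\sigma(x^{-1},x)^{-1} L_{x^{-1}}(n^{-1}), x^{-1})$ and using that normalization gives $\sigma(x^{-1},x) = k(x^{-1})k(x)$, whence $\phi((n,x)^{-1}) = k(x)^{-1}n^{-1} = (nk(x))^{-1}$.

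Next, I would verify that the equivalence diagram commutes. For the right-hand square, given $(n,x) \in \N \times_{(L,\sigma)} \G$,
\begin{align*}
    j\bigl(\phi(n,x)\bigr) = j(n)\,j(k(x)) = p(n)\cdot x = r(x)\cdot x = x,
\end{align*}
which is precisely the projection $\N \times_{(L,\sigma)} \G \to \G$. For the left-hand square, an element $n \in \N$ is embedded into $\N \times_{(L,\sigma)} \G$ as $(n, p(n))$, and $\phi(n,p(n)) = n\, k(p(n)) = n\, p(n) = n$, again by normalization of $k$.

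The main potential obstacle is bookkeeping in the multiplicativity computation: one must chain the twisted action identity and the twisted cocycle equation for $k$ in the correct order so that the $L_x$, $\sigma(x,y)$, and $k(\cdot)$ factors collapse cleanly to $(nk(x))(mk(y))$. Everything else (unit preservation, inversion, and the two commuting squares) is an essentially immediate consequence of the normalization $k|_{\GZ} = \id_{\GZ}$ together with $j \circ k = \id_\G$.
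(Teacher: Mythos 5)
Your proposal is correct and follows essentially the same route as the paper's proof: bijectivity is taken from Example~\ref{ex:facsys}, multiplicativity is checked by exactly the same collapse $nL_x(m)\sigma(x,y)k(xy)=(nk(x))(mk(y))$, and unit preservation plus the commuting squares are handled identically. The only (harmless) difference is that you spell out the inversion argument in more detail than the paper, which simply notes that it follows from unit preservation.
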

\begin{proof}
By Example~\ref{ex:facsys}, it suffices to verify the algebraic conditions.
Indeed, we first note that $\phi(n,s(n))=nk(s(n))=n$ for all $n \in \N$ and $j(\phi(n,x))=j(n)j(k(x))=x$ for all $(n,x) \in \N \times_{(L,\sigma)} \G $.
Now, let $(n,x),(m,y) \in \N \times_{(L,\sigma)} \G $. Then
\begin{gather*}
   \phi((n,x)(m,y))
   =\phi(nL_x(m)\sigma(x,y),xy)=nL_x(m)\sigma(x,y)k(xy)
   \\
   =(nk(x))(mk(y))=\phi(n,x)\phi(m,y).
\end{gather*}
Moreover, since $\phi(u)= u$ for all $u\in \GZ$, we find $\phi((n,x)^{-1})=\phi(n,x)^{-1}$.
\end{proof}

\begin{defi}\label{def:1-cocycle}
    We denote by $C^1(\G,\N)$ the group of all maps $h:\G \to \N$  satisfying $h(x) \in N_{r(x)}$ for all $x \in \G$ and $h(u) = u$
    for all $u \in \GZ$ with respect to the pointwise product.
    Note that this definition extends the definition of 1-cochains in Section~\ref{abeliancohomology} to the non-Abelian case.
\end{defi}

\begin{prop}\label{prop:h.(L,sigma)}
For $h \in C^1(\G,\N)$ and a factor system $(L,\sigma) \in Z^2(\G,\N)$ we define
        \begin{align}
            (h.L)_x(n) &:= h(x) L_x(n) h(x)^{-1}, && x \in \G, n \in N_{s(x)},
            \\
           \label{def:hsigma} (h.\sigma)(x,y) &:= h(x) L_x(h(y)) \sigma(x,y) h(xy)^{-1}, && (x,y) \in \GD.
        \end{align}
    \item
Then $h.(L,\sigma):=(h.L,h.\sigma)$ is a factor system for $(\G,\N)$ and the map $$\alpha:C^1(\G,\N) \times Z^2(\G,\N) \rightarrow Z^2(\G,\N)$$ 
given by $\alpha_h(L,\sigma) := \alpha(h,(L,\sigma)):=h.(L,\sigma)$ defines an action of $C^1(\G,\N)$ on $Z^2(\G,\N)$.
\end{prop}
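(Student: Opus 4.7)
The plan is to verify four items in turn: (a) that $(h.L, h.\sigma)$ lies in $C^1(\G,\iso(\N)) \times C^2(\G,\N)$; (b) the twisted action condition (F1); (c) the twisted cocycle condition (F2); and (d) the two action axioms. For (a), since $h(x) \in N_{r(x)}$, conjugation by $h(x)$ is an inner automorphism of $N_{r(x)}$, so each $(h.L)_x$ is a group isomorphism $N_{s(x)} \to N_{r(x)}$, and the normalizations $h(u) = u$ and $L_u = \id_{N_u}$ immediately yield $(h.L)_u = \id_{N_u}$. All four factors in the definition of $(h.\sigma)(x,y)$ lie in $N_{r(x)}$ since $r(xy) = r(x)$, and the normalizations $\sigma(x,s(x)) = r(x)$, $\sigma(r(x),x) = r(x)$, combined with $h(u)=u$ and $L_x(1_{s(x)}) = 1_{r(x)}$, give the required boundary behavior for $h.\sigma$.

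For (b), expanding $(h.L)_x (h.L)_y(n)$ and using that $L_x$ is a group homomorphism gives $h(x) L_x(h(y)) \, L_x L_y(n) \, L_x(h(y))^{-1} h(x)^{-1}$. Replacing the middle factor via (F1) for $(L,\sigma)$ by $\sigma(x,y) L_{xy}(n) \sigma(x,y)^{-1}$ and then inserting the cancelling pair $h(xy)^{-1} h(xy)$ on each side of $L_{xy}(n)$ produces exactly $(h.\sigma)(x,y) (h.L)_{xy}(n) (h.\sigma)(x,y)^{-1}$.

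Step (c) is the principal obstacle, since it is the only place where both (F1) and (F2) of $(L,\sigma)$ are needed simultaneously. I expand $(h.\sigma)(x,y)(h.\sigma)(xy,z)$, cancel the central $h(xy)^{-1}h(xy)$, and use (F1) for $(L,\sigma)$ to push $\sigma(x,y)$ past $L_{xy}(h(z))$, converting the latter into $L_x L_y(h(z))$. Then (F2) for $(L,\sigma)$ rewrites the resulting $\sigma(x,y)\sigma(xy,z)$ as $L_x(\sigma(y,z))\sigma(x,yz)$. Collecting all $L_x$-terms gives $L_x\bigl(h(y) L_y(h(z)) \sigma(y,z)\bigr)$; inserting the cancelling pair $h(yz)^{-1} h(yz)$ inside the $L_x$ lets me split this as $L_x((h.\sigma)(y,z)) \cdot L_x(h(yz))$. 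After surrounding by $h(x)$ and $h(x)^{-1} h(x) \cdots h(xyz)^{-1}$, the right-hand side becomes $(h.L)_x((h.\sigma)(y,z)) \cdot (h.\sigma)(x,yz)$, as required. The key to keeping this tractable is to insert cancelling pairs only at the places where the definitions of $h.L$ and $h.\sigma$ can be recognized.

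Finally, for (d), the group $C^1(\G,\N)$ has identity $e$ with $e(x) = 1_{r(x)}$; substituting $h = e$ into the defining formulas and using $L_x(1_{s(x)}) = 1_{r(x)}$ gives $(e.L, e.\sigma) = (L,\sigma)$. For compositionality, writing $(h_1 h_2)(x) = h_1(x) h_2(x)$ and applying the fact that conjugation by a product is the composite of conjugations yields $((h_1 h_2).L)_x(n) = h_1(x) (h_2.L)_x(n) h_1(x)^{-1} = (h_1.(h_2.L))_x(n)$. The analogous expansion of $((h_1 h_2).\sigma)(x,y)$ absorbs a cancelling pair $h_2(x)^{-1} h_2(x)$ to produce $h_1(x) (h_2.L)_x(h_1(y)) (h_2.\sigma)(x,y) h_1(xy)^{-1}$, which is precisely $(h_1.(h_2.\sigma))(x,y)$. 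Hence $\alpha_{h_1 h_2} = \alpha_{h_1} \circ \alpha_{h_2}$, completing the proof.
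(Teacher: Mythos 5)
Your proposal is correct and follows essentially the same route as the paper: the verifications of (F1) and (F2) via the identical insert-a-cancelling-pair manipulations, and the compositionality check $\alpha_{h_1h_2}=\alpha_{h_1}\circ\alpha_{h_2}$ by the same expansion. The only difference is that you also spell out the membership of $(h.L,h.\sigma)$ in $C^1(\G,\iso(\N))\times C^2(\G,\N)$ and the identity axiom of the action, which the paper leaves to the reader.
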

\begin{proof}
We only show that $h.(L,\sigma)$ satisfies the twisted action condition~(\hyperref[cond:facsys]{F1}) and the twisted cocycle condition~(\hyperref[cond:facsys]{F2}).
Let $(x,y)\in \GD$ and $n \in N_{s(y)}$.
Then
\begin{align*}
   &(h.\sigma)(x,y)(h.L)_{xy}(n)(h.\sigma)(x,y)^{-1}
   \\
   &=h(x)L_x(h(y))\sigma(x,y)L_{xy}(n)\sigma(x,y)^{-1}L_x(h(y))^{-1}h(x)^{-1}
   \\&
   =h(x)L_x(h(y))L_xL_y(n)L_x(h(y))^{-1}h(x)^{-1}
   \\
   &=h(x)L_x\big(h(y)L_y(n)h(y)^{-1}\big)h(x)^{-1}=(h.L)_{x}\big((h.L)_{y}(n)\big),
\end{align*}
which establishes the twisted action condition~(\hyperref[cond:facsys]{F1}).
Now, let $(x,y,z) \in \G^{(3)}$.
Then
\begin{align*}
    &(h.\sigma)(x,y)(h.\sigma)(xy,z)
    \\
    &=h(x)L_x(h(y))L_xL_y(h(z))L_x(\sigma(y,z))\sigma(x,yz)h(xyz)^{-1}
    \\
    &=h(x)L_x\big(h(y)L_y(h(z))\sigma(y,z)h(yz)^{-1}\big)L_x(h(yz))\sigma(x,yz)h(xyz)^{-1}
    \\
    &=(h.L)_x\big((h.\sigma)(y,z)\big)(h.\sigma)(x,yz),
\end{align*}
and the twisted cocycle condition~(\hyperref[cond:facsys]{F2}) is proved.
Next, we show that $\alpha_{h'}\alpha_h=\alpha_{h'h}$ for all $h,h' \in C^1(\G,\N)$. 
For this let $h,h' \in C^1(\G,\N)$, let $(L,\sigma) \in Z^2(\G,\N)$, and let $(x,y)\in \GD$.
We see at once that $h'.(h.L)=(h'h).L$, and hence it remains to verify that $h'.(h.\sigma)=(h'h).\sigma$. 
Indeed,
\begin{align*}
    &h'.(h.\sigma)(x,y)
    =h'(x)(h.L)_x(h'(y))(h.\sigma)(x,y)h'(xy)^{-1}
    \\
    &=h'(x)h(x)L_x(h'(y))L_x(h(y))\sigma(x,y)h(xy)^{-1}h'(xy)^{-1}
    \\
    &=h'h(x)L_x\big((h'h)(y)\big)\sigma(x,y)(h'h)(xy)^{-1}
    =(h'h).\sigma(x,y)
    \qedhere
\end{align*}
\end{proof}

By Proposition~\ref{prop:h.(L,sigma)}, we have an equivalence relation on the set $Z^2(\G,\N)$ of all factor systems given by 
\begin{align*}
    (L,\sigma) \sim (L',\sigma') \qquad \Longleftrightarrow \qquad \left( \exists h \in C^1(\G,\N) \right) \,\,\, (L',\sigma') = h.(L,\sigma).
\end{align*}
That is, two factor systems are equivalent if they are in the same orbit under the action~$\alpha$. 
We denote the corresponding orbit space of~$\alpha$ by $Z^2(\G,\N)/C^1(\G,\N)$.

\begin{teo}\label{prop:equivext}
For two factor systems $(L,\sigma), (L',\sigma') \in Z^2(\G,\N)$ the following conditions are equivalent:
\begin{enumerate}[{\normalfont \rmfamily  (i)}]
    \item
        $\N \times_{(L,\sigma)} \G$ and $\N \times_{(L',\sigma')} \G$ are equivalent groupoid extensions of $\G$ by $\N$.
    \item
       $(L,\sigma) \sim (L',\sigma')$, \ie, there exists $h \in C^1(\G,\N)$ such that $(L',\sigma') = h.(L,\sigma)$.
\end{enumerate}
If these conditions are satisfied, then the map
\begin{align*}
\psi: \N \times_{(L,\sigma)} \G \to \N \times_{(L',\sigma')} \G, \qquad (n,x) \mapsto (nh(x),x)
\end{align*}
is an equivalence of groupoid extensions and, further, all equivalences of extensions $\N \times_{(L,\sigma)} \G \to \N \times_{(L',\sigma')} \G$ are of this form.
\end{teo}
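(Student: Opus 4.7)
The statement bundles two implications together with a uniqueness claim, so the natural plan is to prove (ii)$\Rightarrow$(i) by producing $\psi$ explicitly, and then to prove (i)$\Rightarrow$(ii) together with the classification of equivalences by forcing the shape of an arbitrary equivalence.

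For (ii)$\Rightarrow$(i), assume $(L',\sigma')=h.(L,\sigma)$ for some $h\in C^1(\G,\N)$ and define $\psi$ as stated. Well-definedness on $\N\times_{(p,r)}\G$ is immediate since $nh(x)\in N_{r(x)}$; the projections to $\G$ intertwine $\psi$ by construction of the second coordinate; and $\psi$ restricts to the identity on $\N$ because $h(u)=u$ for $u\in\GZ$. The only substantive task is verifying compatibility with the groupoid multiplication. I would expand
\[
\psi(n,x)\,\psi(m,y)=(nh(x),x)(mh(y),y)
\]
in $\N\times_{(L',\sigma')}\G$ by substituting the definitions $(h.L)_x(a)=h(x)L_x(a)h(x)^{-1}$ and $(h.\sigma)(x,y)=h(x)L_x(h(y))\sigma(x,y)h(xy)^{-1}$ from Proposition~\ref{prop:h.(L,sigma)} and use $L_x(mh(y))=L_x(m)L_x(h(y))$. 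The inner $h(x)^{\pm1}$ pair then cancels and the remaining product collapses to $(nL_x(m)\sigma(x,y)h(xy),xy)$, which equals $\psi((n,x)(m,y))$. Since $\psi$ is a groupoid homomorphism intertwining the projections to $\G$ and fixing $\N$ pointwise, the remark after the definition of equivalence of extensions promotes it automatically to an equivalence.

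For (i)$\Rightarrow$(ii) and uniqueness, let $\phi$ be any equivalence $\N\times_{(L,\sigma)}\G\to\N\times_{(L',\sigma')}\G$. Commutativity with the projections to $\G$ forces $\phi(n,x)=(f(n,x),x)$ for some $f(n,x)\in N_{r(x)}$. Define $h(x)\in N_{r(x)}$ by $\phi(1_{r(x)},x)=(h(x),x)$. The decomposition $(n,x)=(n,r(x))(1_{r(x)},x)$ in $\N\times_{(L,\sigma)}\G$ is legitimate because $\sigma(r(x),x)=r(x)$ and $L_{r(x)}=\id$, and together with $\phi$ fixing $\N$ this yields
\[
\phi(n,x)=(n,r(x))(h(x),x)=(nh(x),x),
\]
where the right-hand product is computed in $\N\times_{(L',\sigma')}\G$ using the analogous normalisations for $(L',\sigma')$. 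From $\phi$ fixing units we obtain $h(u)=u$ for $u\in\GZ$, so $h\in C^1(\G,\N)$, and $h$ is uniquely determined by $\phi$. Substituting this formula into $\phi((n,x)(m,y))=\phi(n,x)\phi(m,y)$ and specialising, for instance to $m=1_{s(x)},y=s(x)$ to extract the conjugation relation $L'=h.L$, and to $n=1_{r(x)}$ to extract the twisted cocycle relation $\sigma'=h.\sigma$, recovers exactly $(L',\sigma')=h.(L,\sigma)$, closing both (ii) and the uniqueness.

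The main obstacle is the non-Abelian algebraic bookkeeping in the forward direction: each $h(x)^{\pm1}$ must land in the correct position so that the cancellations based on $L_x$ being a group homomorphism and on the normalisations of $L$, $\sigma$, $L'$, $\sigma'$ over $\GZ$ go through cleanly. Compatibility of $\psi$ with inversion then follows automatically from compatibility with multiplication and with units.
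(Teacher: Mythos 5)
Your architecture follows the paper's (explicit construction for (ii)$\Rightarrow$(i), forcing the shape of an arbitrary equivalence for (i)$\Rightarrow$(ii)), but the central computation in your forward direction is false as written. With $(L',\sigma')=h.(L,\sigma)$, expanding $(nh(x),x)(mh(y),y)$ in $\N\times_{(L',\sigma')}\G$ gives
$nh(x)\cdot h(x)L_x(m)L_x(h(y))h(x)^{-1}\cdot h(x)L_x(h(y))\sigma(x,y)h(xy)^{-1}=n\,h(x)^{2}L_x(m)L_x(h(y))^{2}\sigma(x,y)h(xy)^{-1}$,
which does \emph{not} collapse to $nL_x(m)\sigma(x,y)h(xy)$: after the inner $h(x)^{-1}h(x)$ cancels you are left with squared factors, not with the target. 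The formula $(n,x)\mapsto(nh(x),x)$ is a homomorphism in the \emph{opposite} direction, $\N\times_{(L',\sigma')}\G\to\N\times_{(L,\sigma)}\G$ (from the $h$-twisted data to the untwisted data), and this is exactly how the paper's proof sets up its map $\phi$, silently correcting a directional slip in the theorem statement itself. To keep your direction you must act by $h^{-1}$, i.e.\ use $(n,x)\mapsto(nh(x)^{-1},x)$, or equivalently observe that $(L,\sigma)=h^{-1}.(L',\sigma')$ and apply the construction with $h^{-1}$.

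The same mismatch resurfaces in your converse. With $\phi\colon\N\times_{(L,\sigma)}\G\to\N\times_{(L',\sigma')}\G$ and $h(x)$ defined by $\phi(r(x),x)=(h(x),x)$, comparing $\phi((n,x)(m,y))$ with $\phi(n,x)\phi(m,y)$ and setting $y\in\GZ$ yields $L'_x(m)=h(x)^{-1}L_x(m)h(x)$, i.e.\ $(L',\sigma')=h^{-1}.(L,\sigma)$. This still proves (ii) (since $h^{-1}\in C^1(\G,\N)$), but not with the $h$ you exhibit, so your justification of the clause that all equivalences have the stated form is off by an inverse. In addition, your specializations are garbled: taking $m=1_{s(x)}$ and $y=s(x)$ \emph{simultaneously} renders the identity trivial and extracts nothing. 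One needs $y\in\GZ$ with $m$ arbitrary to isolate the relation between $L$ and $L'$, and $m\in\GZ$ with $y$ arbitrary to isolate the relation between $\sigma$ and $\sigma'$ (this is precisely the paper's ``considering $m\in\GZ$ and $y\in\GZ$'' step, meant as two separate substitutions). All of these are repairable direction/inverse slips, but as written the key verification does not hold.
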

\begin{proof}
Let $\N \times_{(L',\sigma')} \G$ and $\N \times_{(L,\sigma)} \G$ be equivalent groupoid extensions of $\G$ by $\N$ and let $\phi:\N \times_{(L',\sigma')} \G\to \N \times_{(L,\sigma)} \G$ be a homomorphism implementing the equivalence.
Then there exists a map $\phi_0:\N \times_{(L',\sigma')} \G \to \N$ such that $\phi$ has the form $\phi(n,x)=(\phi_0(n,x),x)$.
It is easily seen that $\phi_0(n,x)\in N_{r(x)}$ for all $(n,x) \in \N \times_{(L',\sigma')} \G$ and $\phi_0(n,s(n))=1_{p(n)}$ for all $n \in N$.
Moreover, for each $(n,x) \in \N \times_{(L',\sigma')} \G$ we find
\begin{align*}
\phi(n,x)&=\phi(n,r(x))\phi(r(x),x)=(n,r(x))(\phi_0(r(x),x),x)=(n\phi_0(r(x),x),x).
\end{align*}
Consequently, the map $h:\G\rightarrow \N$ given by  $h(x):=\phi_0(r(x),x)$ belongs to $C^1(\G,\N)$ and satisfies $\phi(n,x)=(nh(x),x)$.
To proceed, let $(n,x),(m,y) \in \N \times_{(L',\sigma')} \G $.
Then $\phi\big((n,x)(m,y)\big)=\phi(n,x)\phi(m,y)$, and hence 
\begin{equation}\label{eq: phi&h0}(nL'_x(m)\sigma'(x,y)h(xy),xy)=(nh(x)L_x(mh(y))\sigma(x,y),xy).
\end{equation} 
Considering $m\in \GZ$ and $y\in \GZ$, we thus get $(L',\sigma')=h.(L,\sigma)$.
If, conversely, $(L',\sigma') = h.(L,\sigma)$ for some $h\in C^1(\G,\N)$, then we define 
$$\phi: \N \times_{(L',\sigma')} \G \to \N \times_{(L,\sigma)} \G, \qquad (n,x) \mapsto (nh(x),x)$$
and the considerations above show that $\phi$ implements an equivalence of groupoids.
\end{proof}

\begin{cor}\label{cor:classext}
The map $Z^2(\G,\N)\rightarrow \Ext(\G,\N)$ sending $(L,\sigma)$ to $[\N \times_{(L,\sigma)} \G]$ induces a bijection $H^2(\G,\N):=Z^2(\G,\N)/C^1(\G,\N) \to \Ext(\G,\N)$.
\end{cor}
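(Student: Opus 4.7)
The plan is to observe that essentially all the work has already been done: by Theorem~\ref{prop:equivext} together with Example~\ref{ex:facsys} and Proposition~\ref{prop:equivalence}, the corollary should follow by a short bookkeeping argument. I would decompose the proof into three standard steps: showing that the map $\Phi: Z^2(\G,\N) \to \Ext(\G,\N)$, $(L,\sigma) \mapsto [\N \times_{(L,\sigma)} \G]$, (a) descends to the orbit space, (b) is injective on that quotient, and (c) is surjective.

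For (a) and (b), I would apply Theorem~\ref{prop:equivext} directly. Concretely, if $(L,\sigma) \sim (L',\sigma')$ in the sense that $(L',\sigma') = h.(L,\sigma)$ for some $h \in C^1(\G,\N)$, then the implication (ii)$\Rightarrow$(i) of Theorem~\ref{prop:equivext} produces an explicit equivalence $\psi(n,x) = (nh(x),x)$ between $\N \times_{(L,\sigma)} \G$ and $\N \times_{(L',\sigma')} \G$, so the classes agree in $\Ext(\G,\N)$ and $\Phi$ factors through $H^2(\G,\N)$. Conversely, if the classes $[\N \times_{(L,\sigma)} \G]$ and $[\N \times_{(L',\sigma')} \G]$ coincide, the implication (i)$\Rightarrow$(ii) of Theorem~\ref{prop:equivext} yields $h \in C^1(\G,\N)$ with $(L',\sigma') = h.(L,\sigma)$, \ie, $(L,\sigma)$ and $(L',\sigma')$ represent the same orbit, establishing injectivity of the induced map.

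For surjectivity, I would take an arbitrary class $[\E] \in \Ext(\G,\N)$, represented by a groupoid extension $\N \to \E \stackrel{j}{\to} \G$. Choosing a normalized section $k:\G \to \E$ (possible since $j$ is surjective, $j^0 = \id_{\GZ}$, and we are free to set $k\lvert_{\GZ} = \id_{\GZ}$ and use choice on the remaining fibers), Example~\ref{ex:facsys} produces an associated factor system $(L,\sigma) \in Z^2(\G,\N)$, and Proposition~\ref{prop:equivalence} supplies an explicit equivalence $\N \times_{(L,\sigma)} \G \to \E$, whence $\Phi([(L,\sigma)]) = [\E]$.

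I do not anticipate any genuine obstacle here; the only point that might merit a brief remark is the independence of the construction from the chosen section $k$, but this is precisely what (b) already guarantees---different normalized sections produce factor systems in the same $C^1(\G,\N)$-orbit. Accordingly, the proof should be no more than a short paragraph stringing together the three citations above.
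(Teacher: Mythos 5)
Your proposal is correct and matches the paper's (implicit) argument exactly: the paper states this corollary without proof precisely because well-definedness and injectivity are the content of Theorem~\ref{prop:equivext}, while surjectivity follows from Example~\ref{ex:facsys} and Proposition~\ref{prop:equivalence}. Nothing is missing.
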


In what follows, we call an element $L \in C^1(\G,\iso(\N))$ \emph{outer} if there exists $\sigma \in C^2(\G,\N)$ such that $(L,\sigma)$ satisfies the twisted action condition~(\hyperref[cond:facsys]{F1}).
We emphasize that 
\begin{align*}
   L \sim L' \qquad \Longleftrightarrow \qquad \left( \exists h \in C^1(\G,\N) \right) \,\,\, L' = h.L
\end{align*}
defines an equivalence relation on the set of all outer elements.
Given an outer element $L \in C^1(\G,\iso(\N))$, we denote by $[L]$ the equivalence class of $L$ and call it a $\G$-\emph{kernel}
in accordance with the notion of kernels in the classical theory of non-Abelian extensions of groups (see, \eg,~ \cite[Chap.~4]{maclane2012homology}).

The preceding proposition shows in particular that if $\N \times_{(L,\sigma)} \G$ and $\N \times_{(L',\sigma')} \G$ are equivalent extensions then $[L]=[L']$. 
We write $\Ext(\G,\N)_{[L]}$ for the set of equivalence classes of groupoid extensions of $\G$ by $\N$ corresponding to the $\G$-kernel~$[L]$.
Moreover, we put $Z(\N) := \bigcup_{u\in \GZ} Z(N_u)$
and consider the induced $\G$-module bundle $(Z(\N),L)$ as well as its cohomology theory (\cf~Section \ref{abeliancohomology}).

\begin{teo}\label{teo:equivext}
Suppose that $L \in C^1(\G,\iso(\N))$ is outer with $\Ext(\G,\N)_{[L]} \neq \emptyset$.
Then the following assertions hold:
\begin{enumerate}[{\normalfont \rmfamily  (a)}]
    \item 
        Each class in $\Ext(\G,\N)_{[L]}$ can be represented by one of the form $N\times_{(L,\sigma)}G$.
    \item 
    Let $(L,\sigma')$ and $(L,\sigma)$ be factor systems for $(\G,\N)$. Then $\sigma^{-1}\cdot\sigma'\in Z^2(\G,Z(\N))_{L}$, and moreover $(L,\sigma')\sim (L,\sigma)$ if and only if  $\sigma^{-1}\cdot\sigma'\in B^2(\G,Z(\N))_{L}$.
 \end{enumerate}
\end{teo}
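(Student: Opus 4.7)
For part~(a), I would start from an arbitrary class $[\E] \in \Ext(\G,\N)_{[L]}$, which by Example~\ref{ex:facsys} and Corollary~\ref{cor:classext} is represented by some $\N\times_{(L',\sigma')}\G$ with $[L']=[L]$. By definition of the $\G$-kernel there exists $h \in C^1(\G,\N)$ with $L' = h.L$, and since $\alpha$ is a group action (Proposition~\ref{prop:h.(L,sigma)}), this gives $L = h^{-1}.L'$ and hence $h^{-1}.(L',\sigma') = (L, h^{-1}.\sigma')$. Theorem~\ref{prop:equivext} then identifies $[\E]$ with $[\N\times_{(L, h^{-1}.\sigma')}\G]$, as required.

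For part~(b), put $\rho(x,y) := \sigma(x,y)^{-1}\sigma'(x,y)$ and first locate the values of $\rho$ in the centre. Applying condition~(\hyperref[cond:facsys]{F1}) to $(L,\sigma)$ and $(L,\sigma')$ with the same $n \in N_{s(y)}$ and comparing shows that $\rho(x,y)$ commutes with every $L_{xy}(n)$; since $L_{xy}$ is an isomorphism onto $N_{r(xy)} = N_{r(x)}$, we get $\rho(x,y) \in Z(N_{r(x)})$, and normalization transfers from $\sigma,\sigma'$ to place $\rho \in C^2(\G, Z(\N))$. To obtain the cocycle identity, I would substitute $\sigma' = \sigma\cdot\rho$ into~(\hyperref[cond:facsys]{F2}) for $\sigma'$. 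Two centrality facts drive the computation: $\rho(x,y)$ lies in $Z(N_{r(x)})$ and thus commutes past $\sigma(xy,z) \in N_{r(x)}$, and since $L_x$ restricts to an isomorphism $Z(N_{s(x)}) = Z(N_{r(y)}) \to Z(N_{r(x)})$, the element $L_x(\rho(y,z))$ is central in $N_{r(x)}$ and commutes past $\sigma(x,yz)$. Cancelling~(\hyperref[cond:facsys]{F2}) for $\sigma$ then leaves
\begin{equation*}
\rho(x,y)\rho(xy,z) = L_x(\rho(y,z))\rho(x,yz),
\end{equation*}
which is $d^2_L\rho = 0$ written multiplicatively, so $\rho \in Z^2(\G, Z(\N))_L$.

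For the equivalence statement, assume $(L,\sigma') = h.(L,\sigma)$ for some $h \in C^1(\G,\N)$. The requirement $h.L = L$ combined with surjectivity of $L_x$ forces $h(x) \in Z(N_{r(x)})$, so $h \in C^1(\G, Z(\N))$. The defining identity $\sigma'(x,y) = h(x) L_x(h(y)) \sigma(x,y) h(xy)^{-1}$, together with centrality of $h(x)$, $L_x(h(y))$ and $h(xy)$ in $N_{r(x)}$, lets me move these factors past $\sigma(x,y)$ and extract $\rho(x,y) = h(x) L_x(h(y)) h(xy)^{-1} = d^1_L(h)(x,y)$, so $\rho \in B^2(\G, Z(\N))_L$. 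The converse is obtained by reading the same computation backwards. The main technical difficulty throughout (b) is bookkeeping: one has to consistently track which fiber each element inhabits and verify that $L$ preserves centres, so that the many centrality-based commutations are legitimate; the underlying algebraic moves then parallel those in the classical non-Abelian extension theory for groups.
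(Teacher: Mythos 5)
Your proposal is correct and follows essentially the same route as the paper: part~(a) via Proposition~\ref{prop:equivalence} and the action of $C^1(\G,\N)$, and part~(b) by deducing centrality of $\sigma^{-1}\cdot\sigma'$ from the twisted action condition~(\hyperref[cond:facsys]{F1}), verifying the cocycle identity from~(\hyperref[cond:facsys]{F2}) with the same centrality-based commutations, and identifying the coboundary with $d^1_L(h)$ for a central $h$. The only difference is cosmetic (you substitute $\sigma'=\sigma\cdot\rho$ into~(\hyperref[cond:facsys]{F2}) and cancel, whereas the paper expands $(\sigma^{-1}\cdot\sigma')(x,y)(\sigma^{-1}\cdot\sigma')(xy,z)$ directly), so no further comment is needed.
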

\begin{proof}
\begin{enumerate}[{\normalfont \rmfamily  (a)}]
\item 
    From Proposition~\ref{prop:equivalence} we know that each groupoid extension of $\G$ by $\N$ is equivalent to one of the form $\N \times_{(L',\sigma')} \G$.
    If $[L']=[L]$ and $h\in C^1(\G,\N)$ satisfies $L'=h.L$, then $h^{-1}.(L',\sigma') = (L,h^{-1}.\sigma')$ so that $\sigma'':=h^{-1}.\sigma'$ satisfies $[\N \times_{(L',\sigma')} \G]=[\N \times_{(L,\sigma'')} \G]$, which proves the first claim.
   
\item 
    We first note that $\sigma(x,y)^{-1}\sigma'(x,y)$ is central for every $(x,y)\in \GD$, because $\sigma(x,y)n\sigma(x,y)^{-1}=\sigma'(x,y)n\sigma'(x,y)^{-1}$ for all $(x,y)\in \GD$ and $n\in N_{r(x)}$ by the twisted action condition~(\hyperref[cond:facsys]{F1})
    Now, we check that $\sigma^{-1}\cdot\sigma'$ is a 2-cocycle.
    For this let $(x,y,z) \in \G^{(3)}$.
    Then
    \begin{align*}
        &\big(\sigma^{-1}\cdot\sigma'\big)(x,y)\big(\sigma^{-1}\cdot\sigma'\big)(xy,z)
        \\
        &= \sigma^{-1}(xy,z) \sigma^{-1}(x,y)\sigma'(x,y)\sigma'(xy,z)
        \\
        &=\sigma^{-1}(x,yz)L_x\big( \sigma(y,z)^{-1}\big)L_x\big(\sigma'(y,z)\big)\sigma'(x,yz)
        \\
        &=L_x\big( \sigma(y,z)^{-1}\sigma'(y,z)\big)\sigma^{-1}(x,yz)\sigma'(x,yz)
        \\
        &=L_x\big( \big(\sigma^{-1}\cdot\sigma'\big)(y,z)\big)\big(\sigma^{-1}\cdot\sigma'\big)(x,yz),
    \end{align*}
    where we have used the twisted cocycle condition~(\hyperref[cond:facsys]{F2}) to get the third equation.

    For the second part we first assume that $(L,\sigma')\sim (L,\sigma)$. 
    Then there exists $h\in C^1(\G,\N)$ such that $(L,\sigma')=h.(L,\sigma)$. 
    In particular $L_x(n)=h(x)L_x(n)h(x)^{-1}$ holds for all $x\in \G$ and $n\in N_{s(x)}$, and hence $h\in C^1(\G,Z(\N))$. 
    Since $h$ is central and $\sigma'=h.\sigma$ we further obtain $$\sigma(x,y)^{-1}\sigma'(x,y)=h(x)L_x(h(y))h(xy)^{-1}=d^1_L(h)\in B^2(\G,Z(\N))_{L}.$$ 
    If, conversely, $\sigma^{-1}\sigma'=d^1_L(h)$ for $h\in C^1(\G,Z(\N))$, then $(L,\sigma')=h.(L,\sigma)$.
\qedhere
\end{enumerate}
\end{proof}

\begin{cor}\label{cor:simplytransitiv}
For a $\G$-kernel $[L]$ with $\Ext(\G,\N)_{[L]}\neq\emptyset$ the following map is a well-defined simply transitive action:
\begin{align*}
    H^2(\G,Z(\N))_L \times \Ext(\G,\N)_{[L]}\rightarrow \Ext(\G,\N)_{[L]},\ \ \ \  \left([\rho],[\N\times_{(L,\sigma)} \G]\right) \mapsto [\N \times_{(L,\sigma\cdot \rho)} \G]
\end{align*}
\end{cor}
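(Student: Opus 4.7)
The plan is to verify the statement in four steps: (i) that $(L,\sigma\cdot\rho)$ is again a factor system; (ii) that the assignment descends to equivalence classes; (iii) that it satisfies the action axioms; and (iv) that the resulting action is both transitive and free. Step~(i) is a direct computation. For the twisted action condition~(\hyperref[cond:facsys]{F1}) one uses that $\rho(x,y)\in Z(N_{r(x)})$ to cancel $\rho$ upon conjugation and reduce to~(\hyperref[cond:facsys]{F1}) for $(L,\sigma)$. For the twisted cocycle condition~(\hyperref[cond:facsys]{F2}) one expands $(\sigma\cdot\rho)(x,y)(\sigma\cdot\rho)(xy,z)$, uses centrality of $\rho(x,y)$ to commute $\sigma(xy,z)$ past it, applies~(\hyperref[cond:facsys]{F2}) for $\sigma$ together with centrality of $L_x(\rho(y,z))$, and finishes with the multiplicative $2$-cocycle identity $L_x(\rho(y,z))\rho(x,yz) = \rho(x,y)\rho(xy,z)$ coming from $\rho \in Z^2(\G,Z(\N))_L$.

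For step~(ii), if $\sigma^{-1}\sigma'$ and $\rho^{-1}\rho'$ both lie in $B^2(\G,Z(\N))_L$, then
\[
(\sigma\cdot\rho)^{-1}(\sigma'\cdot\rho')(x,y) = \rho(x,y)^{-1}\sigma(x,y)^{-1}\sigma'(x,y)\rho'(x,y),
\]
and Theorem~\ref{teo:equivext}(b) guarantees that $\sigma(x,y)^{-1}\sigma'(x,y)$ is central in $N_{r(x)}$, so the four factors rearrange to give the pointwise product $(\sigma^{-1}\sigma')\cdot(\rho^{-1}\rho')$, which lies in $B^2(\G,Z(\N))_L$ by closure under pointwise multiplication. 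Step~(iii) is immediate from the pointwise description of the formula, using that $Z(\N)$ is Abelian so that cocycles commute pointwise.

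Step~(iv) is precisely what Theorem~\ref{teo:equivext} provides. Given two classes in $\Ext(\G,\N)_{[L]}$, part~(a) of that theorem lets me represent them by $[\N\times_{(L,\sigma)}\G]$ and $[\N\times_{(L,\sigma')}\G]$ with a common $L$; part~(b) then says that $\rho := \sigma^{-1}\sigma' \in Z^2(\G,Z(\N))_L$, and $[\rho]$ transports the first class to the second by definition of the action, giving transitivity. Conversely, if $[\N\times_{(L,\sigma\cdot\rho)}\G] = [\N\times_{(L,\sigma)}\G]$, then part~(b) forces $\rho = \sigma^{-1}(\sigma\cdot\rho) \in B^2(\G,Z(\N))_L$, \ie $[\rho] = 0$, establishing freeness. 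The main obstacle is really only the bookkeeping in step~(i), where one has to invoke centrality of the $\rho$-values at each commutation needed in the non-Abelian fibers $N_{r(x)}$; once this is done, Theorem~\ref{teo:equivext} does all the structural work.
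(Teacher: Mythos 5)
Your proposal is correct and follows the same route the paper takes: the corollary is stated as an immediate consequence of Theorem~\ref{teo:equivext}, with parts~(a) and~(b) supplying exactly the transitivity and freeness arguments you give, and the remaining verifications (that $(L,\sigma\cdot\rho)$ is again a factor system and that the map descends to classes) being the routine centrality bookkeeping you carry out. Nothing is missing.
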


\begin{example}
    Invoking Example~\ref{ex:fundamentalgroupoid} with $X$ being the figure ``eight'', we easily see that  the fundamental groupoid $\Pi(X)$ is a groupoid extension of the pair groupoid $\G := X \times X$ by the trivial group bundle $\N := X \times \mathbb{F}_2$, where $\mathbb{F}_2$ denotes the free group on two generators.
    Let $L \in C^1(\G,\iso(\N))$ be an outer element for this groupoid extension.
    Since $\mathbb{F}_2$ is centerless, it follows that $H^2(\G,Z(\N))_L$ is trivial, and hence Corollary~\ref{cor:simplytransitiv} implies that $\Ext(\G,\N)_{[L]}$ only contains the class of $\Pi(X)$.
\end{example}

\begin{rem}\label{rem:abelianext}
Suppose $\A$ is an Abelian group bundle. A factor system $(L,\sigma)$ for $(\G,\A)$ consists of a $\G$-module structure $L$ on $\A$, and an element $\sigma \in Z^2(\G,\A)_L$, and we write $\A \times_\sigma \G$ for the corresponding groupoid extension of $\A \times_{(L,\sigma)} \G$.
Furthermore, we have $L \sim L'$ if and only if $L=L'$. 
Hence a $\G$-kernel $[L]$ is the same as a $\G$-module structure $L$ on $\A$ and $\Ext(\G,\A)_L:=\Ext(\G,\A)_{[L]}$ is the set of groupoid extensions of $\G$ by $\A$ for which the associated $\G$-module structure on $\A$ is $L$.
According to Corollary~\ref{cor:simplytransitiv}, the equivalence classes of groupoid extensions correspond to cohomology classes of cocycles, so that the map
\begin{align*}
    H^2(\G,\A)_L \to \Ext(\G,\A)_L, \qquad [\sigma]\mapsto [\A \times_\sigma \G]
\end{align*}
is a well-defined bijection. 
In fact, by \cite[Prop 1.14]{Ren80} it is not only a bijection but also a group isomorphism.
\end{rem}

We conclude this section with a criterion for the nonemptyness of the set $\Ext(\G,\N)_{[L]}$. 
To the best of our knowledge, such a criterion has not been worked out yet.

\begin{lemma}\label{3-cocycle}
    Suppose that $(L,\sigma) \in C^1(\G,\iso(\N)) \times C^2(\G,\N)$ satisfies the twisted action condition~(\hyperref[cond:facsys]{F1}).
    Then the map $\chi_{(L,\sigma)} : \G^{(3)} \to Z(\N)$ given by 
    $$\chi_{(L,\sigma)}(x,y,z) := L_x(\sigma(y,z))\sigma(x,yz) \sigma(xy,z)^{-1} \sigma(x,y)^{-1},\qquad (x,y,z) \in \G^{(3)}$$
    defines a 3-cocycle, \ie, $\chi_{(L,\sigma)} \in Z^{3}(\G,Z(\N))_L$.
\end{lemma}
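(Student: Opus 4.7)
The statement has two parts: that $\chi_{(L,\sigma)}(x,y,z)$ actually lies in $Z(N_{r(x)})$, and that $\chi_{(L,\sigma)}$ then satisfies the 3-cocycle identity. Both can be established using only the twisted action condition~(\hyperref[cond:facsys]{F1}).

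For centrality, I would compare two ways of writing $L_xL_yL_z(n)$ for $(x,y,z)\in\G^{(3)}$ and $n\in N_{s(z)}$. Applying~(\hyperref[cond:facsys]{F1}) first to $(x,y)$ and then to $(xy,z)$ gives
$$L_xL_yL_z(n) = \sigma(x,y)\sigma(xy,z)L_{xyz}(n)\sigma(xy,z)^{-1}\sigma(x,y)^{-1},$$
whereas applying it first to $(y,z)$ and then to $(x,yz)$ gives
$$L_xL_yL_z(n) = L_x(\sigma(y,z))\sigma(x,yz)L_{xyz}(n)\sigma(x,yz)^{-1}L_x(\sigma(y,z))^{-1}.$$
Equating the two expressions shows that $\chi_{(L,\sigma)}(x,y,z)$ commutes with every element of the form $L_{xyz}(n)$, and since $L_{xyz}$ is a bijection onto $N_{r(x)}$, we conclude that $\chi_{(L,\sigma)}(x,y,z)\in Z(N_{r(x)})$.

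For the cocycle identity I would proceed conceptually by viewing $\chi$ as an associator. Endow $\N\times_{(p,r)}\G$ with the (possibly nonassociative) partial operation $(n,x)\star(m,y) := (nL_x(m)\sigma(x,y),xy)$ defined whenever $s(x)=r(y)$. A direct calculation using~(\hyperref[cond:facsys]{F1}) together with the centrality just established shows that the two triple products $(n,x)\star\bigl((m,y)\star(l,z)\bigr)$ and $\bigl((n,x)\star(m,y)\bigr)\star(l,z)$ agree in their second components and differ in their first components by left-multiplication by the central element $\chi_{(L,\sigma)}(x,y,z)$; in other words $\chi$ is precisely the associator of $\star$. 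Now fix $(x,y,z,w)\in\G^{(4)}$ together with arbitrary representatives $a,b,c,d$ above $x,y,z,w$, and traverse Mac~Lane's pentagon formed by the five bracketings of $a\star b\star c\star d$. Each of its five edges introduces one central correction factor---respectively $\chi(x,y,z)$, $\chi(x,yz,w)$, $L_x(\chi(y,z,w))$, $\chi(x,y,zw)^{-1}$ and $\chi(xy,z,w)^{-1}$---and since the pentagon closes, the product of these factors must equal~$1$ in $Z(N_{r(x)})$. Rearrangement in this Abelian group yields
$$L_x(\chi(y,z,w))\cdot\chi(x,yz,w)\cdot\chi(x,y,z) = \chi(xy,z,w)\cdot\chi(x,y,zw),$$
which is the multiplicative form of $d^3_L\chi_{(L,\sigma)}=1$, so that $\chi_{(L,\sigma)}\in Z^3(\G,Z(\N))_L$.

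The main technical care lies in the associator computation and the pentagon bookkeeping: one must track how an inner associator $\chi(y,z,w)$ is transformed into $L_x(\chi(y,z,w))$ when one $\star$-multiplies on the left by $(n,x)$, and use centrality repeatedly to slide the resulting $\chi$-factors past the various $\sigma$-factors and $L$-images in order to collect all five corrections in a single place. A purely computational alternative---substituting the definition of $\chi$ directly into the 3-cocycle identity and cancelling with~(\hyperref[cond:facsys]{F1})---is also feasible, but obscures the pentagon-coherence origin of the identity.
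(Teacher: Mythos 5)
Your proposal is correct. The centrality argument coincides with the paper's: both compute $L_xL_yL_z(n)$ in two ways via~(\hyperref[cond:facsys]{F1}) and use surjectivity of $L_{xyz}$ to conclude that $\chi_{(L,\sigma)}(x,y,z)$ lies in $Z(N_{r(x)})$. For the cocycle identity, however, you take a genuinely different route. The paper substitutes the definition of $\chi$ into $d^3_L(\chi)(x,y,z,w)$ and grinds through a direct computation, introducing auxiliary elements $n_1,\dots,n_4$ and repeatedly invoking~(\hyperref[cond:facsys]{F1}) to collapse the product of the five factors to the identity. You instead interpret $\chi$ as the associator of the (generally nonassociative, since only (F1) is assumed) partial operation $\star$ on $\N\times_{(p,r)}\G$ and read off the identity from the closure of Mac~Lane's pentagon; I checked the three local facts this rests on --- the two bracketings of a triple product differ by the central factor $\chi(x,y,z)$ in the first component, a central correction $c$ becomes $L_x(c)$ under left $\star$-multiplication by $(n,x)$ (using that the isomorphism $L_x$ preserves centers), and is unchanged under right $\star$-multiplication --- and they all hold, so the bookkeeping of the five edges does yield exactly $d^3_L(\chi)=1$ in the paper's conventions. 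Your argument is conceptually cleaner and explains the provenance of the identity, at the cost of setting up the nonassociative structure and carefully justifying how corrections propagate through nested products; the paper's computation is longer and more opaque but entirely self-contained. If you write your version out in full, the one point to make explicit is the propagation rule for inner corrections under left multiplication, since that is where the term $L_x(\chi(y,z,w))$ (rather than $\chi(y,z,w)$ itself) enters.
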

\begin{proof}
For ease of notation we simply put $\chi := \chi_{(L,\sigma)}$.
Let $(x,y,z)\in \G^{(3)}$, let $m\in N_{r(x)}$, and define $n=L^{-1}_{xyz}(m)$.
Then
\begin{align*}
    &\sigma(x,y)\sigma(xy,z)m\sigma(xy,z)^{-1}\sigma(x,y)^{-1}
    =\sigma(x,y)L_{xy}L_z(n)\sigma(x,y)^{-1} =L_{x}L_yL_z(n)
\end{align*}
and further
\begin{align*}
    &L_{x}L_yL_z(n)
    =L_x\big(\sigma(y,z)L_{yz}(n)\sigma(y,z)^{-1}\big)
    \\
    &=L_x\big(\sigma(y,z)\big)\big(L_xL_{yz}(n)\big)L_x\big(\sigma(y,z)\big)^{-1}    \\
    &=L_x\big(\sigma(y,z)\big)\sigma(x,yz)m\sigma(x,yz)^{-1}L_x\big(\sigma(y,z)\big)^{-1}.
\end{align*} 
Therefore $L_x\big(\sigma(y,z)\big)\sigma(x,yz)$ and $\sigma(x,y)\sigma(xy,z)$ define the same inner automorphism of $N_{r(x)}$ and hence $\chi(x,y,z)=L_x\big(\sigma(y,z)\big)\sigma(x,yz)\sigma(xy,z)^{-1}\sigma(x,y)^{-1}$ is a central element. 
This shows that the map $\chi$ is well-defined.
We proceed to show that $\chi$ lies in the kernel of the map $d^3_L:C^3(\G,Z(\N))_L\rightarrow C^4(\G,Z(\N))_L$ given by $$d^3_L(\chi)(x,y,z,w):=L_x(\chi(y,z,w))\chi(xy,z,w)^{-1}\chi(x,yz,w)\chi(x,y,zw)^{-1}\chi(x,y,z).$$
Below we explicitly write down all the factors that we have to multiply.
We also emphasize that they can be multiplied in any order, because $\chi$ is central. 
\begin{itemize}
    \item 
        $\chi(xy,z,w)^{-1}= \sigma(xy,z)\sigma(xyz,w) \sigma(xy,zw)^{-1}L_{xy}(\sigma(z,w)^{-1}).$ 
    \item  
        $\chi(x,y,zw)^{-1}=\sigma(x,y)\sigma(xy,zw) \sigma(x,yzw)^{-1}L_x(\sigma(y,zw)^{-1}).$
    \item  
        $\chi(x,yz,w)= L_x(\sigma(yz,w))\sigma(x,yzw) \sigma(xyz,w)^{-1} \sigma(x,yz)^{-1}.$ 
    \item  
        $\chi(x,y,z)=L_x(\sigma(y,z))\sigma(x,yz)\sigma(xy,z)^{-1}\sigma(x,y)^{-1}.$  
    \item   
        $L_x(\chi(y,z,w))= L_x\left(L_y\big(\sigma(z,w)\big)\sigma(y,zw)\sigma(yz,w)^{-1}\sigma(y,z)^{-1}\right).$ 
\end{itemize}    
\enlargethispage{\baselineskip}
Moreover, for simplicity of the presentation we introduce the following auxiliary elements:
\begin{itemize}
    \item 
        $n_1:=L^{-1}_{zw}\left(\sigma(z,w)^{-1}L^{-1}_{xy}\big(\sigma(x,y)\big)\right)$,
    \item 
        $n_2:=L^{-1}_{yzw}\left(\sigma(y,zw)^{-1}\sigma(yz,w)\right)$,
    \item
        $n_3:=L_{y}\left(L_z\left(L_w(n_1n_2)L^{-1}_{yz}(\sigma(y,z))\right)\right)$,
    \item
        $n_4:=L_{y}\big(\sigma(z,w)\big)\sigma(y,zw)\sigma(yz,w)^{-1}$.
\end{itemize}
Using repeatedly the twisted action condition~(\hyperref[cond:facsys]{F1}), we obtain
$$\chi(xy,z,w)^{-1}\chi(x,y,zw)^{-1}=\sigma(xy,z)\sigma(xyz,w)L_{xyzw}(n_1)\sigma(x,yzw)^{-1}L_x(\sigma(y,zw))^{-1},$$ 
and further 
$$\chi(xy,z,w)^{-1}\chi(x,y,zw)^{-1}\chi(x,yz,w)=\sigma(xy,z)L_{xyz}(L_w(n_1n_2))\sigma(x,yz)^{-1}.$$ 
It follows that
\begin{equation}\label{eq:4factors}
   \chi(xy,z,w)^{-1}\chi(x,y,zw)^{-1}\chi(x,yz,w)\chi(x,y,z)=\sigma(x,y)^{-1}L_{x}(n_3). 
\end{equation}
To proceed, we look more closely at $n_3$. 
Indeed, since $L_yL_{zw}(n_2)=\sigma(yz,w)\sigma(y,zw)^{-1}$, we conclude that
\begin{align*}
    n_3
    &= L_yL_z\big(L_w(n_1n_2)\big)L_yL_z\big(L^{-1}_{yz}(\sigma(y,z))\big)
    \\
    &=L_yL_z\big(L_w(n_1n_2)\big)\sigma(y,z)
    \\
    &=L_y\left(\sigma(z,w)L_{zw}(n_1)L_{zw}(n_2)\sigma(z,w)^{-1}\right)\sigma(y,z)
    \\
    &=L_y\left(L^{-1}_{xy}(\sigma(x,y))L_{zw}(n_2)\sigma(z,w)^{-1}\right)\sigma(y,z)
    \\
    &=L_y\big(L^{-1}_{xy}(\sigma(x,y))\big)\sigma(yz,w)\sigma(y,zw)^{-1}L_y\big(\sigma(z,w)^{-1}\big)\sigma(y,z).
\end{align*}
Combining the previous expression with Equation~(\ref{eq:4factors}), we get
\begin{gather*}
    \chi(xy,z,w)^{-1} \chi(x,y,zw)^{-1}\chi(x,yz,w)\chi(x,y,z)
    =\sigma(x,y)^{-1}L_{x}(n_3)
    \\
    =L_x\left(\sigma(yz,w)\sigma(y,zw)^{-1}L_y\big(\sigma(z,w)^{-1}\big)\sigma(y,z)\right)
    =L_x\big(n_4^{-1}\sigma(y,z)\big),
\end{gather*} 
and finally that
\begin{gather*}
    d^3_L(\chi)(x,y,z,w)
    =L_x\big(n_4^{-1}\sigma(y,z)\big)L_x(\chi(y,z,w))
    =L_x\big(n_4^{-1}\sigma(y,z)\chi(y,z,w)\big)
    \\
    =L_x\big(n_4^{-1}\chi(y,z,w)\sigma(y,z)\big)
    =L_x\big(n_4^{-1}n_4\sigma(y,z)^{-1}\sigma(y,z)\big)=1_{N_{r(x)}}.
    \qedhere
\end{gather*}
\end{proof}

\begin{teo}\label{thm:charclass}
Suppose that $(L,\sigma), (L',\sigma') \in C^1(\G,\iso(\N)) \times C^2(\G,\N)$ satisfy the twisted action condition~(\hyperref[cond:facsys]{F1}) and that $L'\sim L.$
Then $\chi:=\chi_{(L,\sigma)}$ and $\chi':=\chi_{(L,'\sigma')}$ are cohomologous 3-cocycles in $Z^{3}(\G,Z(\N))_L$.
\end{teo}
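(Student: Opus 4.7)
The plan is to reduce to the case $L' = L$ by exhibiting an invariance property of $\chi$ under the action of $C^1(\G,\N)$, and then to realize the cohomology relation through an explicit 2-cochain. The key identity I will establish first is
\begin{align*}
    \chi_{h.(L,\sigma)} = \chi_{(L,\sigma)} \qquad \text{for all } h \in C^1(\G,\N).
\end{align*}
For this, I will observe that the formulas of Proposition~\ref{prop:h.(L,sigma)} define an action of $C^1(\G,\N)$ on the larger set of pairs in $C^1(\G,\iso(\N)) \times C^2(\G,\N)$ satisfying only~(\hyperref[cond:facsys]{F1}): a brief rearrangement shows that~(\hyperref[cond:facsys]{F1}) is preserved by the action and does not use~(\hyperref[cond:facsys]{F2}). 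The invariance identity itself will then be verified by direct substitution of $h.L$ and $h.\sigma$ into the definition of $\chi$, repeatedly invoking~(\hyperref[cond:facsys]{F1}) to replace $L_xL_y(\cdot)$ by conjugates of $L_{xy}(\cdot)$ and using the centrality of $\chi_{(L,\sigma)}$ (granted by Lemma~\ref{3-cocycle}, whose proof relies only on~(\hyperref[cond:facsys]{F1})) in order to commute and cancel all $h$-factors.

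Granted this invariance, I will pick $h \in C^1(\G,\N)$ with $L' = h.L$ and set $\widetilde\sigma := h^{-1}.\sigma'$. Then $h^{-1}.(L',\sigma') = (L, \widetilde\sigma)$ still satisfies~(\hyperref[cond:facsys]{F1}) and, by the invariance above, $\chi_{(L',\sigma')} = \chi_{(L,\widetilde\sigma)}$. It therefore remains to show that $\chi_{(L,\sigma)}$ and $\chi_{(L,\widetilde\sigma)}$ are cohomologous in $Z^3(\G, Z(\N))_L$ for two pairs sharing the same outer action~$L$. For this, define $\eta \in C^2(\G,\N)$ by $\eta(x,y) := \sigma(x,y)^{-1}\widetilde\sigma(x,y)$. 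Because both $\sigma$ and $\widetilde\sigma$ satisfy~(\hyperref[cond:facsys]{F1}) with the same $L$, the elements $\sigma(x,y)$ and $\widetilde\sigma(x,y)$ induce the same inner automorphism of $N_{r(x)}$ on the image of $L_{xy}$, so $\eta(x,y) \in Z(N_{r(x)})$; the normalization conditions required for $\eta \in C^2(\G, Z(\N))$ are inherited from those of $\sigma$ and $\widetilde\sigma$. A direct substitution, exploiting only the centrality of $\eta$ and the Abelianness of $Z(N_{r(x)})$, then yields
\begin{align*}
    \chi_{(L,\widetilde\sigma)}(x,y,z) = d^2_L(\eta)(x,y,z) \cdot \chi_{(L,\sigma)}(x,y,z),
\end{align*}
which shows that $\chi_{(L,\widetilde\sigma)}$ and $\chi_{(L,\sigma)}$ differ by a coboundary in $B^3(\G, Z(\N))_L$, as required.

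The main obstacle is the invariance identity from the first paragraph: several non-central factors must be shuttled past the central element $\chi_{(L,\sigma)}$, and one has to keep careful track of when (\hyperref[cond:facsys]{F1}) is applied to $(L,\sigma)$ rather than to the transformed pair, while also verifying at the outset that the $C^1(\G,\N)$-action is well-defined on pairs satisfying only~(\hyperref[cond:facsys]{F1}). Once this invariance has been secured, the reduction step and the final coboundary calculation are both essentially routine rearrangements guided by the structure of $d^2_L$.
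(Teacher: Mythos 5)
Your proposal is correct and follows essentially the same strategy as the paper's proof: the paper likewise establishes the invariance $\chi_{(L,\sigma)}=\chi_{(h.L,\,h.\sigma)}$ (using that Proposition~\ref{prop:h.(L,sigma)} preserves the twisted action condition) and the same-$L$ comparison $\chi'\chi^{-1}=d^2_L(\sigma^{-1}\cdot\sigma')$ via the central $2$-cochain $\sigma^{-1}\cdot\sigma'$. The only cosmetic difference is the direction of transport — you pull $(L',\sigma')$ back to the $L$-level via $h^{-1}$, whereas the paper pushes $(L,\sigma)$ forward to the $L'$-level via $h$ — which changes nothing of substance.
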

\begin{proof}
To begin with, we note that $L'\sim L$ implies that $L'=L$ on the center $Z(\N)$, and hence the cohomology groups $H^3(\G,Z(\N))_L$ and $H^3(\G,Z(\N))_{L'}$ are, in fact, identical.
To show that $\chi$ and $\chi'$ are cohomologous, we first assume that $L'=L$ and recall that in this case $\sigma^{-1}\cdot\sigma'$ takes values in the center by Theorem~\ref{teo:equivext}~(b). 
Since we also have $\sigma^{-1}\cdot\sigma'=\sigma'\cdot\sigma^{-1}$, it follows that
\begin{align*}
    &\chi'(x,y,z)\chi(x,y,z)^{-1}
    \\
    &=L_x\big(\sigma'(y,z)\big)\sigma'(x,yz) \sigma'(xy,z)^{-1}\sigma'(x,y)^{-1}\sigma(x,y)\sigma(xy,z)\sigma(x,yz)^{-1}L_x\big(\sigma(y,z)^{-1}\big) 
    \\
    &= L_x\big(\sigma'(y,z)\big)\big(\sigma'\cdot\sigma^{-1}\big)(x,yz)L_x\big(\sigma(y,z)^{-1}\big) \big(\sigma'^{-1}\cdot\sigma\big)(xy,z)\big(\sigma'^{-1}\cdot\sigma\big)(x,y)
    \\
    &= L_x\big(\big(\sigma'\cdot\sigma^{-1}\big)(y,z)\big)\big(\sigma'\cdot\sigma^{-1}\big)(x,yz) \big(\sigma'^{-1}\cdot\sigma\big)(xy,z)\big(\sigma'^{-1}\cdot\sigma\big)(x,y)
    \\
    &=d^2_L\big(\sigma^{-1}\cdot\sigma'\big)(x,y,z).
    \end{align*}
Now, if $L'=h.L$ for some $h\in C^1(\G,\N)$ and $\theta:=h.\sigma$ is as in Equation~\eqref{def:hsigma}, then Proposition \ref{prop:h.(L,sigma)} implies that $(L',\theta)$ satisfies the twisted action condition, and further
\begingroup
\allowdisplaybreaks
\begin{align*}
    &L'_x(\theta(y,z))\theta(x,yz)h(xyz)
    \\
    &= L'_x(\theta(y,z))h(x)L_x(h(yz))\sigma(x,yz)
    \\
    &=L'_x\big(\theta(y,z)h(yz)\big)h(x)\sigma(x,yz)
    \\
    &=L'_x\big(h(y)L_y(h(z))\sigma(y,z)\big)h(x)\sigma(x,yz)
    \\
    &=L'_x\big(L'_y(h(z))h(y)\big)L'_x(\sigma(y,z))h(x)\sigma(x,yz)
    \\
    &=L'_x\big(L'_y(h(z))h(y)\big)h(x)L_x(\sigma(y,z))\sigma(x,yz)
    \\
    &=L'_x\big(L'_y(h(z))h(y)\big)h(x)\chi(x,y,z)\sigma(x,y)\sigma(xy,z)
    \\~
    &=\chi(x,y,z)L'_x\big(L'_y(h(z))\big)L'_x(h(y))h(x)\sigma(x,y)\sigma(xy,z)
    \\
    &=\chi(x,y,z)L'_x\big(L'_y(h(z))\big)h(x)L_x(h(y))\sigma(x,y)\sigma(xy,z)
    \\
    &=\chi(x,y,z)L'_x\big(L'_y(h(z))\big)\theta(x,y)h(xy)\sigma(xy,z)
    \\
    &=\chi(x,y,z)\theta(x,y)L'_{xy}(h(z))h(xy)\sigma(xy,z)
    \\
    &=\chi(x,y,z)\theta(x,y)h(xy)L_{xy}(h(z))h(xy)\sigma(xy,z)
    \\
    &=\chi(x,y,z)\theta(x,y)\theta(xy,z)h(xyz).
\end{align*}
\endgroup
Hence $\chi=\chi_{(L',\theta)}$, and combining this with the first step completes the proof.
\end{proof}

\begin{cor}\label{cor:indiclass}
Suppose that $L \in C^1(\G,\iso(\N))$ is outer and choose $\sigma \in C^2(\G,\N)$ such that $(L,\sigma)$ satisfies the twisted action condition~(\hyperref[cond:facsys]{F1}).
Then the cohomology class $[\chi_{(L,\sigma)}] \in H^3(\G,Z(\N))_L$ does not depend on the choice of $\sigma$ and is constant on the equivalence class $[L]$.
\end{cor}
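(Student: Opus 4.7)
The plan is to derive this corollary as an essentially immediate consequence of Theorem~\ref{thm:charclass}; the technical heavy lifting has already been done there, so the remaining task is only bookkeeping with equivalence relations and identifications of cohomology groups.

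First I would observe that the outerness of $L$ guarantees the existence of at least one $\sigma \in C^2(\G,\N)$ such that $(L,\sigma)$ satisfies~(\hyperref[cond:facsys]{F1}), so the 3-cocycle $\chi_{(L,\sigma)} \in Z^3(\G,Z(\N))_L$ produced by Lemma~\ref{3-cocycle} is actually available. For the independence from $\sigma$, I would let $\sigma, \sigma' \in C^2(\G,\N)$ be two choices both turning $(L,\cdot)$ into a solution of~(\hyperref[cond:facsys]{F1}), and then apply Theorem~\ref{thm:charclass} with $L'=L$ (and the trivial equivalence $L \sim L$ realized by $h = \one$). The theorem immediately yields that $\chi_{(L,\sigma)}$ and $\chi_{(L,\sigma')}$ are cohomologous, so $[\chi_{(L,\sigma)}]=[\chi_{(L,\sigma')}]$ in $H^3(\G,Z(\N))_L$.

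For the constancy on the equivalence class $[L]$, I would pick a representative $L' = h.L$ with $h \in C^1(\G,\N)$ and a cochain $\sigma' \in C^2(\G,\N)$ such that $(L',\sigma')$ satisfies~(\hyperref[cond:facsys]{F1}); the existence of $\sigma'$ follows from the fact that $(L', h.\sigma) = h.(L,\sigma)$ satisfies~(\hyperref[cond:facsys]{F1}) by Proposition~\ref{prop:h.(L,sigma)}, so $L'$ is also outer. The key observation to legitimate comparing cohomology classes is that for every $x \in \G$ the maps $L_x$ and $L'_x = h(x) L_x(\,\cdot\,) h(x)^{-1}$ agree on $Z(N_{s(x)})$, since conjugation acts trivially on central elements. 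Consequently the chain complexes $(C^\bullet(\G,Z(\N)),d_L^\bullet)$ and $(C^\bullet(\G,Z(\N)),d_{L'}^\bullet)$ literally coincide, so $H^3(\G,Z(\N))_L = H^3(\G,Z(\N))_{L'}$ as sets. Applying Theorem~\ref{thm:charclass} to the pair $(L,\sigma), (L',\sigma')$ then gives $[\chi_{(L,\sigma)}]=[\chi_{(L',\sigma')}]$ in this common cohomology group, completing the proof.

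I do not expect any serious obstacle here: the only conceptual point to articulate carefully is the identification $H^3(\G,Z(\N))_L = H^3(\G,Z(\N))_{L'}$ under $L' \sim L$, and that follows at once from the centrality of the coefficients. Everything else is a direct quotation of Theorem~\ref{thm:charclass}.
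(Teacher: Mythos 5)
Your proposal is correct and follows exactly the route the paper intends: the corollary is stated without a separate proof precisely because it is the immediate specialization of Theorem~\ref{thm:charclass} (take $L'=L$ for independence of $\sigma$, and a general $L'\sim L$ for constancy on $[L]$), with the identification $H^3(\G,Z(\N))_L = H^3(\G,Z(\N))_{L'}$ already justified at the start of that theorem's proof by the observation that $L$ and $L'$ agree on $Z(\N)$. Your additional remarks — that outerness of $L$ transfers to $L'$ via Proposition~\ref{prop:h.(L,sigma)}, and that the chain complexes with central coefficients literally coincide — are exactly the bookkeeping the paper leaves implicit.
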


On account of Corollary~\ref{cor:indiclass}, each outer element $L \in C^1(\G,\iso(\N))$ gives rise to a characteristic class $\chi(L) \in H^3(\G,Z(\N))_L$.

\begin{cor}\label{cor:charclass}
For a $\G$-kernel $[L]$ we have $\Ext(\G,\N)_{[L]}\neq\emptyset$ if and only if the characteristic class $\chi(L) \in H^3(\G,Z(\N))_L$ is trivial.
\end{cor}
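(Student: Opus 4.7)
The plan is to prove both directions by translating the vanishing of the characteristic class into the possibility of adjusting the pair $(L,\sigma)$ from twisted action data into a full factor system.

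For the \emph{forward} direction, I would start with a representative $\N \times_{(L,\sigma)} \G$ of some class in $\Ext(\G,\N)_{[L]}$, using Theorem~\ref{teo:equivext}(a) to guarantee that the kernel appearing in the factor system is exactly the chosen $L$ (not merely equivalent to it). Because $(L,\sigma)$ is a genuine factor system, the twisted cocycle condition~(\hyperref[cond:facsys]{F2}) forces $\chi_{(L,\sigma)}(x,y,z) = 1_{N_{r(x)}}$ identically, so $\chi(L) = [\chi_{(L,\sigma)}] = 0$ in $H^3(\G,Z(\N))_L$. This direction is essentially a tautology after invoking Theorem~\ref{teo:equivext}(a).

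For the \emph{converse}, assume that $\chi(L)$ is trivial. Since $L$ is outer, choose some $\sigma \in C^2(\G,\N)$ such that $(L,\sigma)$ satisfies~(\hyperref[cond:facsys]{F1}). By Lemma~\ref{3-cocycle}, $\chi_{(L,\sigma)}$ is a 3-cocycle in $Z^3(\G,Z(\N))_L$, and by hypothesis it is a coboundary, i.e.\ there exists a (normalized) $\rho \in C^2(\G,Z(\N))$ with $\chi_{(L,\sigma)}^{-1} = d^2_L(\rho)$. I would then set $\sigma' := \sigma \cdot \rho$ and verify that $(L,\sigma')$ is a factor system. That $\sigma' \in C^2(\G,\N)$ is clear from the normalization of $\rho$. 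The twisted action condition is preserved because $\rho$ takes central values:
\begin{align*}
\sigma'(x,y) L_{xy}(n) \sigma'(x,y)^{-1} = \sigma(x,y) L_{xy}(n) \sigma(x,y)^{-1} = L_x L_y(n).
\end{align*}
For the twisted cocycle condition, a short computation (again using centrality of $\rho$, exactly as in the calculation that showed $\sigma^{-1}\cdot \sigma' \in Z^2(\G,Z(\N))_L$ in Theorem~\ref{teo:equivext}(b)) yields
\begin{align*}
\chi_{(L,\sigma')}(x,y,z) = \chi_{(L,\sigma)}(x,y,z) \cdot d^2_L(\rho)(x,y,z) = 1_{N_{r(x)}},
\end{align*}
so~(\hyperref[cond:facsys]{F2}) holds for $\sigma'$. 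Hence $\N \times_{(L,\sigma')} \G$ is a groupoid extension whose associated $\G$-kernel is $[L]$, so $\Ext(\G,\N)_{[L]} \neq \emptyset$.

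The only real subtlety is confirming that multiplying $\sigma$ by a central 2-cochain $\rho$ twists $\chi_{(L,\sigma)}$ by exactly $d^2_L(\rho)$; this is the main bookkeeping step, but it is routine once one expands $L_x((\sigma\rho)(y,z)) (\sigma\rho)(x,yz) (\sigma\rho)(xy,z)^{-1} (\sigma\rho)(x,y)^{-1}$ and moves the central factors past the non-central ones, separating the expression into $\chi_{(L,\sigma)}(x,y,z)$ times $d^2_L(\rho)(x,y,z)$. Everything else amounts to invoking the already-established results (Theorem~\ref{teo:equivext}(a), Lemma~\ref{3-cocycle}, and Corollary~\ref{cor:indiclass} to ensure well-definedness of $\chi(L)$).
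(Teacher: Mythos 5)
Your proposal is correct and follows essentially the same route as the paper: the forward direction reduces, via Theorem~\ref{teo:equivext}(a), to the observation that a genuine factor system $(L,\sigma)$ has $\chi_{(L,\sigma)}\equiv 1$, and the converse perturbs $\sigma$ by a central $2$-cochain $\rho$ with $d^2_L(\rho)=\chi_{(L,\sigma)}^{-1}$ so that $(L,\sigma\cdot\rho)$ becomes a factor system. The only difference is that you spell out the bookkeeping identity $\chi_{(L,\sigma\cdot\rho)}=\chi_{(L,\sigma)}\cdot d^2_L(\rho)$, which the paper leaves implicit in the phrase ``so that $(L,\sigma\cdot\rho)$ is a factor system.''
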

\begin{proof}
If there exists a groupoid extension $\E$ of $\G$ by $\N$ corresponding to $[L]$, then we may \wilog assume that it is of the form $\N \times_{(L,\sigma)} \G$ for some factor system $(L,\sigma)$ for $(\G,\N)$. 
This in particular implies that $\chi_{(L,\sigma)} = 1$ and hence the characteristic class $\chi(L) \in H^3(\G,Z(\N))_L$ is trivial.
If, conversely, $L \in C^1(\G,\iso(\N))$ is outer and $\chi(L) \in H^3(\G,Z(\N))_L$ is trivial, then there exists $\sigma \in C^2(\G,\N)$ and $\rho \in C^2(\G,Z(\N))$ such that $\chi_{(L,\sigma)} = \chi_{(L,\rho^{-1})}$, so that $(L,\sigma \cdot \rho)$ is a factor system.
It follows that $\N \times_{(L,\sigma \cdot \rho)} \G$ is a groupoid extension of $\G$ by $\N$ corresponding to $[L]$.
This completes the proof.
\end{proof}

\section{Groupoid rings of groupoid extensions}\label{sec:groupoidrings}

Here and subsequently, let $\N \to \E \stackrel{j}{\to} \G$ be a groupoid extension.
In this section we associate certain groupoid crossed products (\cf~Section~\ref{sec:groupoidcrossprod}) with the groupoid extension, study their relationship, and establish, as an application, that the groupoid ring of $\E$ is isomorphic to a $\G$-crossed product over the groupoid ring of $\N$.

For a start let us consider the following two equivalence relations on $\GZ$:
We say that two objects $u,v \in \GZ$ are \emph{$\G$-equivalent}, denoted $u \sim_{\G} v$, if there exists $x \in \G$ such that $s_\G(x)=u$ and $r_\G(x)=v$. 
Similarly, we say that two objects $u,v \in \GZ$ are \emph{$\E$-equivalent}, denoted $u \sim_{\E} v$, if there exists $x \in \E$ such that $s_\E(x)=u$ and $r_\E(x)=v$. 
Making use of normalized sections of $j$, we see at once that either $\sim_{\E}$ or $\sim_{\G}$ generate the same partition $\{S_{\lambda}\}_{\lambda \in \Lambda}$ of $\GZ$.

\begin{rem}
For each $\lambda \in \Lambda$ we may look at the set $\E_\lambda := \{x\in \E : s_\E(x),r_\E(x)\in S_\lambda\}$. 
It is easily checked that the family $\{\E_\lambda\}_{\lambda \in \Lambda}$ consists of subgroupoids of $\E$. 
The same procedure applied to $\G$ and $\N$ yields a family $\{\G_\lambda\}_{\lambda \in \Lambda}$ of subgroupoids of $\G$ and a family $\{\N_\lambda\}_{\lambda \in \Lambda}$ of subgroupoids of $\N$, respectively.
We thus get a family 
\begin{align*}
    \N_\lambda \longrightarrow \E_\lambda \stackrel{j\lvert_{\E_\lambda}}{\longrightarrow} \G_\lambda, \quad \lambda \in \Lambda
\end{align*}
of groupoid extensions.
\end{rem}

To proceed, let $\{R_{\lambda}\}_{\lambda \in \Lambda}$ be a family of unital rings.
Below we present two constructions of factor systems in the sense of Definition~\ref{def:facsysring}: 
\begin{enumerate}
    \item\label{eq:ass.fac.sys}
        For each $u\in \GZ$ we put $R_u:=R_{\lambda}[N_u]$, where $\lambda$ is the unique element in $\Lambda$ such that $u \in S_{\lambda}$, and consider the ring bundle $\R:=\bigcup_{u \in \GZ}R_u$ over $\GZ$.
        Furthermore, let $k:\G\rightarrow \E$ be a normalized section for $j$ and let $(L,\sigma)$ be the associated factor system~(\cf~Example~\ref{ex:facsys}).
        Then a straightforward computation shows that the family of maps
        \begin{gather*}
            M_x:R_{s(x)} \rightarrow R_{r(x)}, \qquad M_x(f) := f\circ L_{x}^{-1}, \qquad x \in \G,
            \shortintertext{and}
            \tau:\GD\rightarrow \R^\times, \qquad \tau(x,y) := \delta_{\sigma(x,y)}
        \end{gather*}
        yields a factor system $(M,\tau)$ for $(\G,\R)$.
    \item\label{eq:trivial.fac.sys}
        For each $u\in \GZ$ we put $R'_u:=R_{\lambda}$,  where $\lambda$ is the unique element in $\Lambda$ such that $u \in S_{\lambda}$. 
Then $\R':=\bigcup_{u \in \GZ}R'_u$ is a ring bundle over $\GZ$ and the family of maps
\begin{gather*}
    M'_x = \id:R'_{s(x)} \rightarrow R'_{r(x)},  \qquad x \in \E,
    \shortintertext{and}
    \tau':\E^{(2)}\rightarrow \R^\times, \qquad \tau'(x,y)=1_{R'_s(x)}
\end{gather*}
yields a factor system for $(\E,\R')$.
\end{enumerate}

For expedience we put all of this on record:

\begin{prop}\label{prop:gpdringiso}
Let $\N \to \E \stackrel{j}{\to} \G$ be a groupoid extension and let $\{R_{\lambda}\}_{\lambda \in \Lambda}$ be a family of unital rings, where $\Lambda$ indexes the partition of $\GZ$ \wrt the equivalence relation $\sim_{\G}$.
Then the following assertions hold:
\begin{enumerate}[{\normalfont \rmfamily  (a)}]
    \item\label{prop:assfacsys}
        If $k:\G\rightarrow \E$ is a normalized section for $j$ and $(L,\sigma)$ is the associated factor system,
        then $(M,\tau)$ defined in \ref{eq:ass.fac.sys} above is a factor system for $(\G,\R)$.
    \item
        $(M',\tau')$ defined in \ref{eq:trivial.fac.sys} above is a factor system for $(\E,\R')$.
\end{enumerate}
\end{prop}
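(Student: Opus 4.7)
The strategy for both parts is to verify directly the cochain conditions and the two axioms (C1), (C2) of Definition~\ref{def:facsysring}. Since part (b) involves only identity maps and the multiplicative identity~$1$, I would dispense with it in a sentence and concentrate on part (a), which carries essentially all of the content.

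For part (a), the key conceptual input is the functoriality of the group ring construction with a fixed coefficient ring: a group isomorphism $L_x : N_{s(x)} \to N_{r(x)}$ extends uniquely and $R_\lambda$-linearly to a ring isomorphism $M_x : R_{s(x)} \to R_{r(x)}$ characterized by $M_x(\delta_n) = \delta_{L_x(n)}$, and under the presentation of $R_\lambda[N]$ as finitely supported functions this extension is precisely $f\mapsto f\circ L_x^{-1}$. The property $M_u = \id$ then drops out of $L_u = \id$, placing $M$ in $C^1(\G,\iso(\R))$. For $\tau$, each $\delta_{\sigma(x,y)}$ is invertible in $R_{r(x)}^\times$ with inverse $\delta_{\sigma(x,y)^{-1}}$ because $\sigma(x,y)\in N_{r(x)}$, and the normalization $\tau(x,s(x)) = \tau(r(x),x) = r(x)$ follows directly from the corresponding normalization of $\sigma$ in Definition~\ref{def:facsys}.

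The twisted action condition (C1) and the twisted cocycle condition (C2) for $(M,\tau)$ I would deduce by pushing the identities (F1), (F2) for $(L,\sigma)$ through the multiplicative map $n\mapsto \delta_n$. Concretely, (F1) reads $L_xL_y(n) = \sigma(x,y)L_{xy}(n)\sigma(x,y)^{-1}$ in $N_{r(x)}$, and applying $\delta_{(\cdot)}$ together with $\delta_a\delta_b = \delta_{ab}$ yields $M_xM_y(\delta_n) = \tau(x,y)M_{xy}(\delta_n)\tau(x,y)^{-1}$; extending $R_\lambda$-linearly then gives (C1) on all of $R_{s(y)}$. Similarly, (F2) is turned into (C2) verbatim by replacing each occurrence of $\sigma(\cdot,\cdot)$ by $\delta_{\sigma(\cdot,\cdot)}$ and each $L_x$ by $M_x$, using $M_x(\delta_n) = \delta_{L_x(n)}$.

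I do not anticipate a genuine obstacle: the argument is in essence a functoriality statement for the group ring construction, and the only delicate bookkeeping concerns the identifications between $\GZ$, the units of the fibers $N_u$ of $\N$, and the units of the group rings $R_u = R_\lambda[N_u]$ that make up the bundle $\R$.
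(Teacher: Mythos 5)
Your proposal is correct and matches the paper's approach: the paper itself only records this as ``a straightforward computation'' preceding the proposition, and your verification via the functoriality of the group ring construction (reducing (C1) and (C2) to (F1) and (F2) through $n\mapsto\delta_n$, $M_x(\delta_n)=\delta_{L_x(n)}$, $\delta_a\delta_b=\delta_{ab}$) is precisely the intended computation. The one implicit point worth keeping in mind is that $R_{s(x)}$ and $R_{r(x)}$ share the same coefficient ring $R_\lambda$ because $s(x)\sim_\G r(x)$, which is exactly why $\Lambda$ indexes the partition under $\sim_\G$.
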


\begin{rem}\label{rem:gpdringiso}
\begin{enumerate}[{\normalfont \rmfamily  (a)}]
    \item 
        We shall refer to $(M,\tau)$ as \emph{the factor system associated with $\{R_{\lambda}\}_{\lambda \in \Lambda}$ and $(L,\sigma)$}.
        If $R_\lambda=R$ for all $\lambda\in \Lambda$, then we denote $\R$ by $R[\N]$.
    \item
        We shall refer to $(M',\tau')$ as the \emph{trivial factor system system associated with $\E$ and $\{R_{\lambda}\}_{\lambda \in \Lambda}$}.
        Note that if $R_\lambda=R$ for all $\lambda\in \Lambda$, then the associated groupoid crossed product is simply the groupoid ring $R[\E]$.
\end{enumerate}
\end{rem}

Having disposed these preparatory steps, we are now ready to prove the following:

\pagebreak[3]
\begin{teo}\label{teo:gpdringiso}
Let $\N \to \E \stackrel{j}{\to} \G$ be a groupoid extension and let $\{R_{\lambda}\}_{\lambda \in \Lambda}$ be a family of unital rings, where $\Lambda$ indexes the partition of $\GZ$ \wrt the equivalence relation~$\sim_{\G}$.
Furthermore, let $k:\G\rightarrow \E$ be a normalized section for $j$, let $(L,\sigma)$ be the associated factor system, and let $(M,\tau)$ be the factor system associated with $\{R_{\lambda}\}_{\lambda \in \Lambda}$ and $(L,\sigma)$.
Finally, let $(M',\tau')$ be the trivial factor system associated with $\E$ and $\{R_{\lambda}\}_{\lambda \in \Lambda}$.
Then the respective groupoid crossed products $\R \times_{(M,\tau)}\G$ and $\R'\times_{(M',\tau')}\E$ (\cf~Proposition~\ref{def:crossedproduct}) are isomorphic.
\end{teo}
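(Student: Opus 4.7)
My approach is to construct an explicit isomorphism using the bijection $\phi: \N \times_{(p,r)} \G \to \E$, $(n,x) \mapsto nk(x)$ from Example~\ref{ex:facsys}. Concretely, I would define a map
\[
\Phi: \R \times_{(M,\tau)} \G \to \R' \times_{(M',\tau')} \E, \qquad \Phi(f)(e) := f(j(e))\bigl(e\, k(j(e))^{-1}\bigr),
\]
where for $e \in \E$ with $x:=j(e)$, the element $e\,k(x)^{-1}$ lies in $N_{r(x)}$, and $f(x) \in R_\lambda[N_{r(x)}]$ is evaluated at it to produce a scalar in $R'_{r_\E(e)} = R_\lambda$. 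The well-definedness (range condition and finite support) is immediate from the bijectivity of $\phi$, and additivity is clear.

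The crux of the proof is multiplicativity, which I would check on a generating set. For each $(n,x) \in \N \times_{(p,r)} \G$ and $r \in R_\lambda$ with $r(x) \in S_\lambda$, write $r[n,x]$ for the element of $\R \times_{(M,\tau)} \G$ supported at $x$ with value $r \delta_n \in R_\lambda[N_{r(x)}]$; such elements span the ring. A direct unpacking of the product formula in Proposition~\ref{def:crossedproduct} yields
\[
r[n,x] \cdot r'[n',x'] = (rr')\bigl[n L_x(n')\sigma(x,x'),\, xx'\bigr]
\]
whenever $(x,x') \in \GD$ and zero otherwise, while for the trivial factor system the analogous formula becomes $r[e] \cdot r'[e'] = (rr')[ee']$ whenever $(e,e') \in \E^{(2)}$. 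Since $\Phi(r[n,x]) = r[nk(x)]$, multiplicativity reduces to the identity
\[
n k(x) n' k(x') = \bigl(n L_x(n') \sigma(x,x')\bigr) k(xx'),
\]
which follows from the defining relations $L_x(n') k(x) = k(x) n'$ and $k(x) k(x') = \sigma(x,x') k(xx')$ established in Example~\ref{ex:facsys}. The compatibility of composability in $\G$ and in $\E$ under $\phi$ handles the zero cases.

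For bijectivity, I would exhibit an explicit inverse
\[
\Psi: \R' \times_{(M',\tau')} \E \to \R \times_{(M,\tau)} \G, \qquad \Psi(g)(x) := \sum_{n \in N_{r(x)}} g(nk(x))\, \delta_n,
\]
the sum being finite thanks to the finite support of $g$. A short computation using $\phi$ shows $\Phi \circ \Psi = \id$ and $\Psi \circ \Phi = \id$, completing the proof.

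The main obstacle is really bookkeeping: making sure the two bundle structures and the two $\R$-valued vs.\ $R_\lambda$-valued conventions match up correctly so that the formulas for $\Phi$ and $\Psi$ take values in the right fibers. Once one commits to viewing $f(x)$ as a finitely supported function on $N_{r(x)}$, the algebraic verification collapses to the two identities above, which are precisely the content of the factor system $(L,\sigma)$ extracted from the section $k$.
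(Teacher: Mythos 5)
Your proposal is correct, and it constructs exactly the same pair of mutually inverse maps as the paper (with the names $\Phi$ and $\Psi$ attached to the opposite directions): both isomorphisms are the ones induced by the bijection $(n,x)\mapsto nk(x)$ of Example~\ref{ex:facsys}. The only genuine divergence is in how multiplicativity is verified. The paper checks it for arbitrary $f,g$ by first establishing the fibrewise bijection
\begin{equation*}
\{(s,t)\in \ED : st=n'k(z)\}=\big\{\big(nk(x),\,L_x^{-1}(m)k(y)\big) : xy=z,\ nm=n'\sigma(x,y)^{-1}\big\}
\end{equation*}
and then reindexing the convolution sum, whereas you reduce by bi-additivity to the spanning elements $r[n,x]$ and to the single identity $nk(x)\,n'k(x')=nL_x(n')\sigma(x,x')\,k(xx')$, which follows from the defining relations of the factor system $(L,\sigma)$. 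These encode the same computation; your version is somewhat lighter to write down (the reduction to a spanning set is harmless since both multiplications are bi-additive and $\Phi$ is additive), while the paper's version avoids any discussion of generators and handles the zero cases automatically through the set equality. Your remark that the vanishing cases match because composability in $\G$ and in $\E$ correspond under $(n,x)\mapsto nk(x)$ is also correct, using that $j^{0}=\id_{\GZ}$, so nothing is missing.
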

\begin{proof}
Let us consider the maps
\begin{align*}
    &\Phi:\R' \times_{(M',\tau')}\E\rightarrow \R \times_{(M,\tau)}\G, \qquad \Phi(f)(x)(n):=f(nk(x))
    \shortintertext{and}
    &\Psi:\R \times_{(M,\tau)}\G \rightarrow \R' \times_{(M',\tau')}\E, \qquad \Psi(f)(e):=f(x)(n),
\end{align*}
where in the latter case $(n,x)\in \N\times \G$ is the unique element such that $e=nk(x)$ (cf. Example \ref{ex:facsys}). 
We first establish that $\Phi$ and $\Psi$ are mutual inverses. 
For this, let $f\in \R\rtimes_{(M,\tau)}\G$, $x\in \G$, and $n\in N_{r(x)}$. Then  $\Phi(\Psi(f))(x)(n)=\Psi(f)(nk(x))=f(x)(n).$
Moreover, for $f\in \R'\times_{(M',\tau')}\E$ and $e=nk(x)\in \E$ we have
$ \Psi(\Phi(f))(e)=\Phi(f)(x)(n)=f(nk(x))=f(e)$, which proves the assertion.
Since $\Phi$ is clearly additive, it remains to show that $\Phi$ is multiplicative.
To this end, let $z \in \G$ and let $n' \in N_{r(z)}$.
Then
\begin{equation*}\label{eq:phiiso}
\{(s,t)\in \ED : st=n'k(z) \}
=
\big\{\left(nk(x), L_x^{-1}(m)k(y)\right) : xy=z, \, nm=n'\sigma(x,y)^{-1} \big\}.
\end{equation*}
\begin{proof}[Proof of the equality]
Let $(s,t) \in \ED$ be such that $st=n'k(z)$ and write $s=nk(x)$ and $t=m'k(y)$.
Put $m:=L_x(m')$ and note that $n'k(z)=nk(x)m'k(y)=nm\sigma(x,y)k(xy).$ 
By uniqueness, we may conclude that $xy=z$ and $nm=n'\sigma(x,y)^{-1}$, and therefore $(s,t)=\left(nk(x), L_x^{-1}(m)k(y)\right).$ 
The inverse inclusion follows from multiplication.
\end{proof}
Now, a standard calculation shows that
\begin{align*}
   \Phi(f)\Phi(g)(z)(n')
   &=\ \ \sum_{xy=z}\Big(\Phi(f)(x)M_x(\Phi(g)(y))\tau(x,y)\Big)(n')
   \\
   &=\ \ \sum_{xy=z}\sum_{nm=n'\sigma(x,y)^{-1}}\Phi(f)(x)(n)M_x(\Phi(g)(y))(m)
   \\
   &=\ \ \sum_{xy=z}\sum_{nm=n'\sigma(x,y)^{-1}}f(nk(x))g\left(L_x^{-1}(m)k(y)\right)
   \\
   &=\sum_{st=n'k(z)}f(s)g(t)=\sum_{st=n'k(z)}f(s)M'_s(g(t))\tau'(s,t)
   =\Phi(fg)(z)(n'),
\end{align*}
which in turn completes the proof.
\end{proof}

\begin{cor}\label{cor:gpdringiso}
Under the hypotheses of Theorem~\ref{teo:gpdringiso} with $R_\lambda:=R$ for all $\lambda \in \Lambda$ we have that
$R[\N] \times_{(M,\tau)} \G$ is isomorphic to the groupoid ring $R[\E]$ (\cf~Remark~\ref{rem:gpdringiso}).
\end{cor}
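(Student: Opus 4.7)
The plan is to deduce the corollary as a direct specialization of Theorem~\ref{teo:gpdringiso}, so essentially no new computation is required beyond unpacking the definitions when $R_\lambda = R$ for every $\lambda \in \Lambda$.

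First I would verify that, under this assumption, the ring bundle $\R$ coincides with $R[\N]$. Indeed, for each $u \in \GZ$ we have $R_u = R_\lambda[N_u] = R[N_u]$ by the construction preceding Proposition~\ref{prop:gpdringiso}, and the notational convention in Remark~\ref{rem:gpdringiso}(a) then gives $\R = \bigcup_{u \in \GZ} R[N_u] = R[\N]$. Hence the groupoid crossed product $\R \times_{(M,\tau)} \G$ appearing in Theorem~\ref{teo:gpdringiso} becomes $R[\N] \times_{(M,\tau)} \G$, which is the left-hand side of the claim.

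Next I would identify the right-hand side. Under the specialization $R_\lambda = R$, the bundle $\R'$ from the construction of the trivial factor system has $R'_u = R$ for every $u \in \GZ$, and the factor system $(M',\tau')$ consists of identity morphisms and trivial $2$-cocycle, as recorded in item~\ref{eq:trivial.fac.sys} preceding Proposition~\ref{prop:gpdringiso}. By Remark~\ref{rem:gpdringiso}(b), the associated crossed product $\R' \times_{(M',\tau')} \E$ is then nothing but the groupoid ring $R[\E]$; this can be seen immediately from the product formula in Proposition~\ref{def:crossedproduct}, which under trivial twisting reduces to the usual convolution $(fg)(z) = \sum_{xy=z} f(x)g(y)$ defining $R[\E]$.

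Combining these two identifications with the isomorphism $\R \times_{(M,\tau)} \G \cong \R' \times_{(M',\tau')} \E$ established in Theorem~\ref{teo:gpdringiso} yields the desired isomorphism $R[\N] \times_{(M,\tau)} \G \cong R[\E]$. There is no real obstacle here: the only point to be careful about is that the trivial-twisting conventions in item~\ref{eq:trivial.fac.sys} genuinely recover the groupoid-ring multiplication, which is precisely what Remark~\ref{rem:gpdringiso}(b) records. Hence the corollary follows at once from Theorem~\ref{teo:gpdringiso}.
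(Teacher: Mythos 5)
Your proposal is correct and matches the paper's reasoning: the corollary is indeed a direct specialization of Theorem~\ref{teo:gpdringiso} obtained by setting $R_\lambda = R$, identifying $\R$ with $R[\N]$ and $\R'\times_{(M',\tau')}\E$ with $R[\E]$ via the conventions recorded in Remark~\ref{rem:gpdringiso}. The paper gives no separate proof precisely because this unpacking is all that is needed.
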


In the remaining part of this section we extend Corollary~\ref{cor:gpdringiso} to the realm of C\Star algebras. 
For this we first need to suitably adapt Definition~\ref{def:facsysring}:



\begin{defi}
Let $\G$ be a groupoid and let $\R$ be a bundle of normed unital \Star algebras over $\GZ$. 
A \emph{\Star factor system for $(\G,\R)$} is a factor system $(M,\tau)$ in the sense of Definition~\ref{def:facsysring} with the additional property that $M$ is a family of isometric \Star isomorphisms and that $\tau(x,y)^{-1}=\tau(x,y)^*$ for all $(x,y)\in \GD$. 
\end{defi}

\begin{example}\label{ex:apb1}
Let $\G$ be a groupoid, let $\N$ be a group bundle over $\GZ$, and let $(L,\sigma)$ be a factor system for $(\G,\N)$.
Then the construction in \ref{eq:ass.fac.sys}, on page \pageref{eq:ass.fac.sys}, with $R_\lambda:=\mathbb{C}$ for all $\lambda\in \Lambda$ yields, in fact, a \Star factor system for $(\G,\mathbb{C}[\N])$.
\end{example}

\begin{prop}
Let $\G$ be a groupoid, let $\R$ be a bundle of normed unital \Star algebras over $\GZ$, and let
$(M,\tau)$ be a \Star factor system for $(\G,\R)$.
Then $\R\times_{(M,\tau)} \G$ becomes a normed \Star algebra when endowed with the norm $\|f\|_{1}:=\sum_{x\in \G}\|f(x)\|$ and the involution $f^*(x):=\tau(x,x^{-1})^{-1}M_x\left(f(x^{-1})\right)^*$, $x \in \G$.
\end{prop}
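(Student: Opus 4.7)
The plan is to verify in order that (a) $\|\cdot\|_1$ is a submultiplicative norm, (b) the assignment $f\mapsto f^*$ is well-defined, (c) the standard \Star algebra identities hold, and (d) the involution is isometric with respect to $\|\cdot\|_1$.

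For (a), finite support of each $f$ makes $\|f\|_1$ a finite sum of fibrewise norms, so the usual pointwise argument shows that $\|\cdot\|_1$ is a norm. Submultiplicativity follows from the estimate $\|(fg)(z)\|\le\sum_{xy=z}\|f(x)\|\,\|M_x(g(y))\|\,\|\tau(x,y)\|$ together with the isometry of each $M_x$; one also needs that $\tau(x,y)$ is a \Star unitary in $R_{r(x)}$ with norm $\le 1$ (as is automatic in the concrete setting of Example~\ref{ex:apb1}). Summing over $z\in\G$ and reindexing by composable pairs $(x,y)\in\GD$ then gives $\|fg\|_1\le\|f\|_1\|g\|_1$.

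For (b), $f^*$ has finite support because $\mathrm{supp}(f^*)=\mathrm{supp}(f)^{-1}$, and $f^*(x)\in R_{r(x)}$ since $f(x^{-1})\in R_{s(x)}$, $M_x$ maps $R_{s(x)}$ to $R_{r(x)}$, and $\tau(x,x^{-1})\in R_{r(x)}^\times$. For (c) and (d), additivity and conjugate linearity are immediate; the identity $\|f^*\|_1=\|f\|_1$ follows from the isometry of each $M_x$ together with $\|\tau(x,x^{-1})\|=1$. To establish $(f^*)^*=f$, I would expand
\[(f^*)^*(x) = \tau(x,x^{-1})^{-1}M_x\bigl(\tau(x^{-1},x)^{-1}M_{x^{-1}}(f(x))^*\bigr)^*\]
and collapse it using that $M_x$ is a \Star homomorphism, the identity~\eqref{eq:apb1} rewriting $M_x(\tau(x^{-1},x))=\tau(x,x^{-1})$, the hypothesis $\tau(x,x^{-1})^{-1}=\tau(x,x^{-1})^*$, and condition~(C1) applied to $(x,x^{-1})$, which yields $M_xM_{x^{-1}}(n)=\tau(x,x^{-1})\,n\,\tau(x,x^{-1})^{-1}$; the $\tau$-factors then cancel in pairs, leaving $f(x)$.

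The main obstacle is the antimultiplicativity $(fg)^*=g^*f^*$. My approach is to fix $z\in\G$, expand both sides explicitly, and reindex the right-hand side via the bijection $(a,b)=(y^{-1},x^{-1})$ on pairs with $ab=z$ (equivalently, $xy=z^{-1}$). After this substitution, matching terms summand by summand reduces to an identity in $R_{r(z)}$ involving products of the form $\tau(z,y^{-1})$, $\tau(x,x^{-1})$ and their images under $M$; this is precisely what the identities~\eqref{eq:apb2}, \eqref{eq:apb3} and \eqref{eq:apb4} of Remark~\ref{rem:identities} are designed to handle: \eqref{eq:apb4} commutes $\tau(z,y^{-1})$ past $M_x$, while \eqref{eq:apb2} and \eqref{eq:apb3} rewrite and collapse the remaining $\tau$-products. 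This calculation is purely computational but lengthy, and constitutes the bulk of the proof; no identity beyond those in Remark~\ref{rem:identities} is required.
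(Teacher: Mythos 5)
Your proposal follows essentially the same route as the paper's own proof: the submultiplicativity estimate by reindexing over composable pairs, the collapse of $(f^*)^*$ via the identity~\eqref{eq:apb1} and condition~(C1) applied to $(x,x^{-1})$, and the antimultiplicativity $(fg)^*=g^*f^*$ by expanding at a fixed $z$, substituting $(y^{-1},x^{-1})$ for the composable pairs, and invoking exactly the identities~\eqref{eq:apb2}, \eqref{eq:apb3} and~\eqref{eq:apb4} of Remark~\ref{rem:identities}. Your remark that the norm estimates additionally require $\|\tau(x,y)\|\le 1$ is a point the paper passes over silently, and flagging it is a small improvement rather than a deviation.
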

\begin{proof}
Clearly, $\|\cdot\|_{1}$ is a norm and the involution is linear conjugate and isometric. 
For $f,\ g \in \R\times_{(M,\tau)} \G$ we now check that $\|fg\|_1\leq\|f\|_1\|g\|_1$, $(f^*)^*=f$, and $(fg)^*=g^*f^*$.
Indeed, we have
\pagebreak[3]
\begin{align*}
 \|fg\|_1&\leq
 \sum_{z\in \G}\sum_{\substack{(x,y)\in \GD \\ xy=z}}\|f(x)\|\|g(y)\|
=\sum_{(x,y)\in \GD}\|f(x)\| \|g(y)\|
\\
&\leq 
\Bigg( \sum_{x \in \G}\|f(x)\| \Bigg) 
\Bigg( \sum_{y\in \G} \|g(y)\| \Bigg)= \|f\|_1 \|g\|_1.
\end{align*}
Moreover, for each $x \in \G$ we find
\begin{align*}
    (f^*)^*(x)&=
    \tau(x,x^{-1})^{-1}M_x\left(\tau(x^{-1},x)^{-1}M_{x^{-1}}(f(x))^*\right)^*
    \\
    &=\tau(x,x^{-1})^{-1}M_x\Big(\left(M_{x^{-1}}(f(x))\tau(x^{-1},x)\right)^*\Big)^*
    \\
    &=\tau(x,x^{-1})^{-1}M_xM_{x^{-1}}\left(f(x)\right)M_x\left(\tau(x^{-1},x)\right)
    \\
    &\stackrel{ (\ref{eq:apb1})}{=}\tau(x,x^{-1})^{-1}M_xM_{x^{-1}}\left(f(x)\right)\tau(x,x^{-1})
    =M_{r(x)}(f(x))=f(x).
\end{align*}
Finally, for $z \in \G$ a straightforward computation gives
\begingroup
\allowdisplaybreaks
\begin{align*}
     g^*f^*(z)&=
     \sum_{xy=z}\tau(x,x^{-1})^{-1}M_x\left(g(x^{-1})\right)^*M_x\left(\tau(y,y^{-1})^{-1}M_y\left(f(y^{-1})\right)^*\right)\tau(x,y)
     \\
     &=\sum_{xy=z}\tau(x,x^{-1})^{-1}M_x\left(g(x^{-1})\right)^*M_x\left( M_y\left(f(y^{-1})\right)\tau(y,y^{-1})\right)^*\tau(x,y)
     \\
     &= \sum_{xy=z}\tau(x,x^{-1})^{-1}\Big(\tau(x,y)^{-1}M_x\left(M_y\left(f(y^{-1})\right)\tau(y,y^{-1})\right)M_x\left(g(x^{-1})\right)\Big)^*
     \\
     &=\sum_{xy=z}\tau(x,x^{-1})^{-1}\Big(M_{z}\left(f(y^{-1})\right)\tau(x,y)^{-1}M_x\left(\tau(y,y^{-1})\right)M_x\left(g(x^{-1})\right)\Big)^*
     \\
     &\stackrel{(\ref{eq:apb2})}{=}\sum_{xy=z}\tau(x,x^{-1})^{-1}\Big(M_{z}\left(f(y^{-1})\right)\tau(z,y^{-1})M_x\left(g(x^{-1})\right)\Big)^*
     \\
     &\stackrel{(\ref{eq:apb4})}{=}\sum_{xy=z}\tau(x,x^{-1})^{-1}\Big(M_{z}\left(f(y^{-1})M_{y^{-1}}\left(g(x^{-1})\right)\right)\tau(z,y^{-1})\Big)^*
     \\
     &=\sum_{xy=z}\tau(x,x^{-1})^{-1}\tau(z,y^{-1})^{-1}M_{z}\Big(f(y^{-1})M_{y^{-1}}\left(g(x^{-1})\right)\Big)^*
     \\
     &\stackrel{(\ref{eq:apb3})}{=}\sum_{xy=z}\tau(z,z^{-1})^{-1}M_{z}\Big(f(y^{-1})M_{y^{-1}}\left(g(x^{-1})\right)\tau(y^{-1},x^{-1})\Big)^*
     \\
     &=\tau(z,z^{-1})^{-1}M_{z}\Big(\sum_{xy=z}f(y^{-1})M_{y^{-1}}\left(g(x^{-1})\right)\tau(y^{-1},x^{-1})\Big)^*
     \\
     &=\tau(z,z^{-1})^{-1}M_{z}\left(fg(z^{-1})\right)=(fg)^*(z).
     \qedhere
\end{align*}
\endgroup
\end{proof}

\begin{defi}
Let $\G$ be a groupoid, let $\R$ be a bundle of complex normed unital \Star algebras over $\GZ$, and let $(M,\tau)$ be a \Star factor system for $(\G,\R)$.
The \textit{C\Star algebra for $(\G,\R,M,\tau)$} is the universal enveloping C\Star algebra of the complex normed \Star algebra $(\R\times_{(M,\tau)} \G,\|\cdot\|_{1},^*)$ and will be denoted by $C^*(\G,\R,M,\tau)$.
\end{defi} 

\begin{example}
Let $\N \to \E \stackrel{j}{\to} \G$ be a groupoid extension and let $(M',\tau')$ be the trivial factor system associated with $\E$ and the family $R_\lambda := \mathbb{C}$, $\lambda \in \Lambda$ (\cf \ref{eq:trivial.fac.sys} on page~\pageref{eq:trivial.fac.sys}). 
Then $C^*(\E,\R', M',\tau')$ is the well-known groupoid $C^*$-algebra of $\E$, $C^*(\E)$.
\end{example}

\begin{prop}\label{prop:cstar}
Under the hypotheses of Theorem~\ref{teo:gpdringiso} with $R_\lambda=\mathbb{C}$ for all $\lambda \in \Lambda$ we have that the map $\Phi:\mathbb{C}[\E]\rightarrow \mathbb{C}[\N] \times_{(M,\tau)}\G$ given by $\Phi(f)(x)(n):=f(nk(x))$ is an isometric \Star homomorphism, and therefore the $C^*$-algebras $C^*(\E)$ and $C^*(\G,\mathbb{C}[\N],M,\tau)$ are isomorphic.
\end{prop}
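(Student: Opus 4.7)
Thanks to Theorem~\ref{teo:gpdringiso} we already know that $\Phi$ is a ring isomorphism, and by Example~\ref{ex:apb1} the pair $(M,\tau)$ is a $^{*}$-factor system, so both sides are normed $^{*}$-algebras. The strategy is to verify the two remaining structural properties of $\Phi$ --- isometry for the $\ell^{1}$-norms and compatibility with the involutions --- and then to conclude via the universal property of the enveloping C$^{*}$-algebra.

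For isometry, I would invoke the bijection $\N\times_{(p,r)}\G\to\E$, $(n,x)\mapsto nk(x)$, of Example~\ref{ex:facsys}. Taking the $\ell^{1}$-norm on each group-ring fiber $\mathbb{C}[N_{u}]=R_{u}$, the double sum simply reindexes to
\[
\|\Phi(f)\|_{1}=\sum_{x\in\G}\sum_{n\in N_{r(x)}}|f(nk(x))|=\sum_{e\in\E}|f(e)|=\|f\|_{1}.
\]

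The harder step is $^{*}$-compatibility. The trivial factor system on $\E$ gives $f^{*}(e)=\overline{f(e^{-1})}$, so $\Phi(f^{*})(x)(n)=\overline{f(k(x)^{-1}n^{-1})}$. On the other side, unwinding the involution on $\mathbb{C}[\N]\times_{(M,\tau)}\G$ with $\tau(x,y)=\delta_{\sigma(x,y)}$, $M_{x}(g)=g\circ L_{x}^{-1}$, and the group-ring involution on $\mathbb{C}[N_{r(x)}]$, a short convolution computation produces
\[
\Phi(f)^{*}(x)(n)=\overline{f\bigl(L_{x}^{-1}(n^{-1}\sigma(x,x^{-1})^{-1})\,k(x^{-1})\bigr)}.
\]
The identification of the two expressions then reduces to showing $L_{x}^{-1}(n^{-1}\sigma(x,x^{-1})^{-1})\,k(x^{-1})=k(x)^{-1}n^{-1}$ in $\E$. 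Substituting $L_{x}^{-1}(\cdot)=k(x)^{-1}(\cdot)k(x)$, this becomes $k(x)^{-1}n^{-1}\sigma(x,x^{-1})^{-1}k(x)k(x^{-1})=k(x)^{-1}n^{-1}$, which follows immediately from the defining relation $k(x)k(x^{-1})=\sigma(x,x^{-1})$ obtained by specialising Equation~(\ref{kcocycle}) to $(x,x^{-1})$. The main obstacle is precisely to keep track of the three twists (convolution by $\delta_{\sigma(x,x^{-1})^{-1}}$, the automorphism $L_{x}^{-1}$, and the group-ring involution) simultaneously; they collapse through this single factor-system identity.

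Once $\Phi$ is established as an isometric $^{*}$-isomorphism of normed $^{*}$-algebras, the universal property of the enveloping C$^{*}$-algebra lifts $\Phi$ to an isomorphism $C^{*}(\E)\cong C^{*}(\G,\mathbb{C}[\N],M,\tau)$, completing the proof.
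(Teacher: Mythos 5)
Your proposal is correct and follows essentially the same route as the paper's proof: isometry by reindexing the double sum via the bijection $(n,x)\mapsto nk(x)$, and $^*$-compatibility by unwinding both involutions and collapsing the twists with the identity $k(x)k(x^{-1})=\sigma(x,x^{-1})$. The paper performs exactly this computation (in the form $k(x)^{-1}n^{-1}\sigma(x,x^{-1})^{-1}k(x)k(x^{-1})=k(x)^{-1}n^{-1}$), so there is nothing to add.
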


\begin{proof}
We already know from Theorem~\ref{teo:gpdringiso} that $\Phi$ is a ring homomorphism.
Since it is obviously $\mathbb{C}$-linear, we are reduced to proving that $\Phi$ is isometric and $\Phi(f^*)=\Phi(f)^*.$ 
Indeed, a short computation shows that
\begin{align*}
  \|\Phi(f)\|_1&
  =\sum_{x\in \G}\|\Phi(f)(x)\|
  = \sum_{x\in \G}\sum_{n\in \N_{r(x)}}|\Phi(f)(x)(n)|
 \\
  &= \sum_{x\in \G}\sum_{n\in \N_{r(x)}}|f(nk(x)|
  =
  \sum_{z\in \E}|f(z)|=\|f\|.
 \end{align*}
Moreover, for $x\in \G$ and $n\in N_{r(x)}$ we have
\begin{align*}
    \Phi(f)^*(x)(n)&
    =\left(\tau(x,x^{-1})^{-1}M_x\left(\Phi(f)(x^{-1})\right)^*\right)(n)
   \\&
    =M_x\left(\Phi(f)(x^{-1})\right)^*\left(\sigma(x,x^{-1})n\right)
    \\
    &=\left(\Phi(f)(x^{-1})\right)^*\left(L_x^{-1}\left(\sigma(x,x^{-1})n\right)\right)
    \\
    &=\overline{\Phi(f)(x^{-1})\left(L_x^{-1}\left(n^{-1}\sigma(x,x^{-1})^{-1}\right)\right)}
    \\
    &=\overline{f\left(L_x^{-1}\left(n^{-1}\sigma(x,x^{-1})^{-1}\right)k(x^{-1})\right)}
    \\
    &=\overline{f\left(k(x)^{-1}n^{-1}\sigma(x,x^{-1})^{-1}k(x)k(x^{-1})\right)}
    \\
    &=\overline{f\left(k(x)^{-1}n^{-1}\right)}=\Phi(f^*)(x)(n).
    \qedhere
    \end{align*}
\end{proof}

\section{Groupoid crossed products and their classification}\label{sec:GroupoidCrossedProd}

In this section we provide a classification theory for  groupoid crossed products by using the techniques developed in Section~\ref{sec:extgrp}.
The proofs of our statements may be handled in the exact same way as the proofs of the respective statements in Section~\ref{sec:extgrp} and are therefore omitted for the sake of a concise presentation.

Throughout the following let $\G$ be a groupoid.
Furthermore, let $\R$ be a unital ring~bundle over~$\GZ$ and let $\R^\times$
be the induced group bundle over $\GZ$ (\cf~Section~\ref{sec:groupoidcrossprod}).
We start with the following definition:

\begin{defi}[\cf~Definition~{\ref{def:1-cocycle}}]
        We let $C^1(\G,\R^\times)$ stand for the group of all maps $h:\G \to \R^\times$  satisfying
        $h(x) \in R^\times_{r(x)}$ for all $x \in \G$ and $h(u) = 1_{R_{r(u)}}$ for all $u \in \GZ$ with respect to the pointwise product.
\end{defi}

\begin{prop}[\cf~Proposition~{\ref{prop:h.(L,sigma)}}]
For $h \in C^1(\G,\R^\times)$ and a factor system $(M,\tau) \in Z^2(\G,\R)$ we define
        \begin{align}
            (h.M)_x(n) &:= h(x) M_x(n) h(x)^{-1}, && x \in \G, n \in R_{s(x)},
            \\
           \label{def:htau} (h.\tau)(x,y) &:= h(x) M_x(h(y)) \tau(x,y) h(xy)^{-1}, && (x,y) \in \GD.
        \end{align}
    \item
Then $h.(M,\tau):=(h.M,h.\tau)$ is a factor system for $(\G,\R)$ and the map $$\beta:C^1(\G,\R) \times Z^2(\G,\R) \rightarrow Z^2(\G,\R)$$ defines an action of $C^1(\G,\R)$ on $Z^2(\G,\R)$.
\end{prop}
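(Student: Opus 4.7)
The plan is to mirror the proof of Proposition~\ref{prop:h.(L,sigma)} line by line, with group multiplication and group conjugation replaced by ring multiplication and inner automorphisms of the unital fibers $R_u$. Since $h(x) \in R_{r(x)}^\times$ for every $x \in \G$, all the same algebraic manipulations remain available; the only difference is that one has to keep track of which fiber each symbol lives in and that conjugation by $h(x)$ is still a ring automorphism of $R_{r(x)}$ (as opposed to a group automorphism of $N_{r(x)}$).

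First I would check that $h.(M,\tau)$ lands in $C^1(\G,\iso(\R)) \times C^2(\G,\R^\times)$. Each $(h.M)_x$ is the composition of the ring isomorphism $M_x: R_{s(x)} \to R_{r(x)}$ with conjugation by the invertible element $h(x) \in R_{r(x)}^\times$, hence is itself a ring isomorphism; normalization on $\GZ$ follows from $h(u)=1_{R_u}$ together with $M_u=\id_{R_u}$. Likewise, $(h.\tau)(x,y)$ is a product of invertible elements of $R_{r(x)}$ and hence lies in $R_{r(x)}^\times$, and it satisfies $(h.\tau)(x,s(x)) = (h.\tau)(r(x),x) = 1_{R_{r(x)}}$ because $h$ and $\tau$ are both normalized.

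Next I would verify conditions~(C1) and~(C2) of Definition~\ref{def:facsysring}. Substituting the defining formulas into the left-hand side of (C1) and using the twisted action condition for $(M,\tau)$, the factors $\tau(x,y)$ and $\tau(x,y)^{-1}$ cancel against each other and the two outer occurrences of $h(xy)^{\pm 1}$ cancel, leaving the expression $h(x)M_x(h(y))M_xM_y(n)M_x(h(y))^{-1}h(x)^{-1}$, which equals $(h.M)_x((h.M)_y(n))$ because $M_x$ is a ring homomorphism. For (C2), the same telescoping computation as in the proof of Proposition~\ref{prop:h.(L,sigma)} applies, using only the twisted cocycle identity for $\tau$, the multiplicativity of $M_x$, and the fact that all occurring factors are invertible. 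Finally, to see that $\beta$ defines an action I would check $\beta_{h'} \beta_h = \beta_{h'h}$: the identity $h'.(h.M) = (h'h).M$ is immediate from writing out conjugations, and $h'.(h.\tau) = (h'h).\tau$ follows by the same string of equalities as in Proposition~\ref{prop:h.(L,sigma)}, with $L_x$ replaced by $M_x$ throughout.

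The main obstacle is purely bookkeeping: one must verify at each step that the products being formed are actually products of elements in a common fiber $R_{r(x)}$ and that the invertibility required to conjugate is preserved. No conceptually new idea beyond the group case is needed; everything reduces to the observations that $M_x$ is a ring isomorphism, that $h$ takes values in units, and that the factor system axioms for $(M,\tau)$ were already proved to behave well under such modifications in the group-bundle setting.
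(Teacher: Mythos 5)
Your proposal is correct and follows exactly the route the paper intends: the authors explicitly omit this proof, stating that it is handled in the exact same way as Proposition~\ref{prop:h.(L,sigma)}, and your line-by-line transcription (with $M_x$ a ring isomorphism, $h$ valued in units so that conjugation is an inner ring automorphism, and the same cancellations verifying (C1), (C2), and $\beta_{h'}\beta_h=\beta_{h'h}$) is precisely that argument. No further comment is needed.
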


We call two factor systems $(M,\tau)$ and $(M',\tau')$ for $(\G,\R)$ \emph{equivalent}, written with symbols $(M,\tau) \sim (M',\tau')$, if they are in the same orbit under the action $\beta$. 
We denote the corresponding orbit space of $\beta$ by $Z^2(\G,\R)/C^1(\G,\R)$.

\pagebreak[3]
\begin{prop}[\cf~Proposition~{\ref{prop:equivext}}]\label{prop:equivcrossprod}
For two factor systems $(M,\tau), (M',\tau') \in Z^2(\G,\R)$ the following conditions are equivalent:
\begin{enumerate}[{\normalfont \rmfamily  (i)}]
    \item 
        $\R \times_{(M,\tau)} \G$ and $\R \times_{(M',\tau')} \G$ are equivalent.
    \item
       $(M,\tau)\sim (M',\tau')$, \ie, there exists $h \in C^1(\G,\R^\times)$ such that $(M',\tau') = h.(M,\tau)$.
\end{enumerate}
If these conditions are satisfied, then the map
\begin{align*}
\psi: \R \times_{(M,\tau)} \G \to \R \times_{(M',\tau')} \G, \qquad (n,x) \mapsto (nh(x),x)
\end{align*}
is an equivalence of $\G$-crossed products over $\R$ and, further, all equivalences of $\G$-crossed products over $\R$, $\R \times_{(M,\tau)} \G \to \R \times_{(M',\tau')} \G$, are of this form.
\end{prop}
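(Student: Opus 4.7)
The plan is to follow the proof of Theorem~\ref{prop:equivext} step by step, with the group bundle $\N$ replaced by the ring bundle $\R$, group isomorphisms by ring isomorphisms, and $C^1(\G,\N)$ by $C^1(\G,\R^\times)$.

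For the implication (i) $\Rightarrow$ (ii), I would start from an equivalence $\phi: \R \times_{(M',\tau')} \G \to \R \times_{(M,\tau)} \G$, \ie, a $\G$-graded ring isomorphism with $\phi|_R = \id_R$. Because $\phi$ is graded, it restricts on each $x$-homogeneous piece to an additive bijection. Writing a typical element of degree $x$ as $(n,x)$ with $n \in R_{r(x)}$, the decomposition $(n,x) = (n, r(x))(1_{R_{r(x)}}, x)$ combined with $\phi|_R = \id_R$ and multiplicativity forces $\phi(n,x) = (n\phi_0(r(x),x), x)$ for some map $\phi_0$. I would then set $h(x) := \phi_0(r(x),x)$ and confirm that $h \in C^1(\G,\R^\times)$: the normalization $h(u) = 1_{R_u}$ for $u \in \GZ$ is automatic from $\phi|_R = \id_R$, and object invertibility $h(x) \in R_{r(x)}^\times$ follows by running the same analysis on $\phi^{-1}$, which yields a two-sided inverse of $h(x)$ inside $R_{r(x)}$.

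Next I would extract the factor system identities. Multiplicativity $\phi((n,x)(m,y)) = \phi(n,x)\phi(m,y)$ unwinds to
\begin{align*}
    nM'_x(m)\tau'(x,y)h(xy) = nh(x)M_x(mh(y))\tau(x,y),
\end{align*}
valid for all $(x,y) \in \GD$, $n \in R_{r(x)}$, $m \in R_{r(y)}$. Specializing $y$ to a unit in $\GZ$ (so that $\tau$, $\tau'$, and $h(y)$ trivialise) yields the twisted action relation $M'_x = (h.M)_x$, while specializing $n = m = 1$ yields the twisted cocycle relation $\tau' = h.\tau$; together these give $(M',\tau') = h.(M,\tau)$.

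For the converse (ii) $\Rightarrow$ (i), given $h \in C^1(\G,\R^\times)$ with $(M',\tau') = h.(M,\tau)$, I would define $\phi(n,x) := (nh(x),x)$ (identity on $\R$) and verify directly that $\phi$ is a graded ring homomorphism; bijectivity is witnessed by the formula $(n,x) \mapsto (nh(x)^{-1},x)$. These verifications mirror the computations in Example~\ref{ex:facsys} and Proposition~\ref{prop:equivalence}, transported verbatim to the ring setting. The main obstacle, though mild, is the one already flagged in the first step: ensuring that the scalar $h(x)$ pulled out of $\phi$ is genuinely object invertible rather than merely an element of $R_{r(x)}$. The argument via $\phi^{-1}$ resolves this, but one could equally invoke the crossed product hypothesis, which supplies object-invertible generators of every degree; $\phi$ must send these to object-invertible elements of the same degree, forcing $h(x) \in R_{r(x)}^\times$.
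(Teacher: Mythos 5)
Your proposal is correct and follows essentially the same route as the paper, which omits the proof of this proposition on the grounds that it is handled exactly as the proof of Theorem~\ref{prop:equivext}: extract $h(x):=\phi_0(r(x),x)$ from the decomposition $(n,x)=(n,r(x))(1_{R_{r(x)}},x)$, read off $(M',\tau')=h.(M,\tau)$ from multiplicativity by specializing to units, and invert the construction for the converse. Your additional observation that $h(x)\in R_{r(x)}^\times$ must be verified (via $\phi^{-1}$ or via preservation of object-invertible elements) addresses the one point where the ring setting genuinely differs from the group setting, a point the paper leaves implicit.
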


\begin{cor}[\cf~Corollary~{\ref{cor:classext}}]
The map 
$Z^2(\G,\R)\rightarrow \Ext(\G,\R)$ sending $(M,\tau)$ to $[\R \times_{(M,\tau)} \G]$
induces a bijection $H^2(\G,\R):=Z^2(\G,\R)/C^1(\G,\R^{\times}) \to \Ext(\G,\R)$.
\end{cor}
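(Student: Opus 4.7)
The plan is to assemble the bijection by combining the two preceding results: Proposition~\ref{def:crossedproduct}, which sets up the correspondence between factor systems and $\G$-crossed products, and Proposition~\ref{prop:equivcrossprod}, which characterizes when two such crossed products are equivalent. First I would define the map $\Phi : Z^2(\G,\R) \to \Ext(\G,\R)$ by $\Phi(M,\tau) := [\R \times_{(M,\tau)} \G]$. By Proposition~\ref{def:crossedproduct}, $\R \times_{(M,\tau)} \G$ is indeed a $\G$-crossed product over $\R$ for any $(M,\tau) \in Z^2(\G,\R)$, so $\Phi$ is well-defined as a map into $\Ext(\G,\R)$.

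Next I would show that $\Phi$ factors through the orbit space $H^2(\G,\R) = Z^2(\G,\R)/C^1(\G,\R^\times)$. If $(M,\tau) \sim (M',\tau')$, then by definition there exists $h \in C^1(\G,\R^\times)$ with $(M',\tau') = h.(M,\tau)$. The forward direction of Proposition~\ref{prop:equivcrossprod} (specifically, the explicit equivalence $\psi : (n,x) \mapsto (nh(x),x)$) implies that $\R \times_{(M,\tau)} \G$ and $\R \times_{(M',\tau')} \G$ are equivalent $\G$-crossed products over $\R$, so $\Phi(M,\tau) = \Phi(M',\tau')$. Hence the induced map $\overline{\Phi} : H^2(\G,\R) \to \Ext(\G,\R)$ is well-defined.

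Injectivity of $\overline{\Phi}$ follows from the converse implication of Proposition~\ref{prop:equivcrossprod}: if $[\R \times_{(M,\tau)} \G] = [\R \times_{(M',\tau')} \G]$ in $\Ext(\G,\R)$, then there exists an equivalence of $\G$-crossed products, which by the classification of such equivalences must be of the form $\psi(n,x) = (nh(x),x)$ for some $h \in C^1(\G,\R^\times)$, and then $(M',\tau') = h.(M,\tau)$ so $[M,\tau] = [M',\tau']$ in $H^2(\G,\R)$. Surjectivity is the converse half of Proposition~\ref{def:crossedproduct}: any $\G$-crossed product $S$ over $\R$ can be presented as $\R \times_{(M,\tau)} \G$ for some factor system $(M,\tau) \in Z^2(\G,\R)$, whence $[S] = \overline{\Phi}([M,\tau])$ lies in the image.

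There is no real obstacle here; the statement is a direct bookkeeping consequence of the two preceding propositions and runs entirely parallel to the proof of Corollary~\ref{cor:classext} in the groupoid-extension setting. The only subtle point worth double-checking is that the notions of equivalence match up on both sides, \ie that an equivalence of $\G$-crossed products over $\R$ in the sense of the definition of $\Ext(\G,\R)$ is exactly what Proposition~\ref{prop:equivcrossprod} characterizes; this is built into the statement of that proposition.
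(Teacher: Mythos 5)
Your proposal is correct and follows exactly the route the paper intends: the corollary is a direct consequence of Proposition~\ref{def:crossedproduct} (well-definedness and surjectivity) together with Proposition~\ref{prop:equivcrossprod} (descent to the orbit space and injectivity), which is precisely the argument the paper omits as being ``handled in the exact same way'' as Corollary~\ref{cor:classext}. No gaps.
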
    

In accordance with Section~3 we say that an element $M \in C^1(\G,\iso(\R))$ is \emph{outer} if there exists $\tau \in C^2(\G,\R^\times)$ such that $(M,\tau)$ satisfies the twisted action condition~(\hyperref[cond:facsysring]{C1}) and note that
\begin{align*}
   M \sim M' \qquad \Longleftrightarrow \qquad \left( \exists h \in C^1(\G,\R^\times) \right) \,\,\, M' = h.M
\end{align*}
defines an equivalence relation on the set of all outer elements.
Given an outer element $M \in C^1(\G,\iso(\N))$, we write $[M]$ for the equivalence class of $M$ and call it a $\G$-\emph{kernel}.
Proposition~\ref{prop:equivcrossprod} entails that $\R \times_{(M,\tau)} \G \sim \R \times_{(M',\tau')} \G$ implies $[M]=[M']$, \ie, equivalent $\G$-crossed products over $\R$ correspond to the same $\G$-kernel $[M]$.
We denote by $\Ext(\G,\R)_{[M]}$ the set of equivalence classes of $\G$-crossed products over $\R$ corresponding to the $\G$-kernel~$[M]$.
Moreover, we put $Z(\R)^\times := \bigcup_{u\in \GZ} Z(R_u)^\times$
and consider the induced $\G$-module bundle $(Z(\R)^\times,M)$ as well as its cohomology theory (\cf~Section \ref{abeliancohomology}).

\begin{teo}[\cf~Theorem~{\ref{teo:equivext}}]\label{thm:Classification}
Let $M \in C^1(\G,\iso(\R))$ with $\Ext(\G,\R)_{[M]} \neq \emptyset$.
Then the following assertions hold:
\begin{enumerate}[{\normalfont \rmfamily  (a)}]
    \item 
        Each class in $\Ext(\G,\R)_{[M]}$ can be represented by one of the form $\R\times_{(M,\tau)}G$.
    \item
        Let $(M,\tau')$ and $(L,\tau)$ be factor systems for $(\G,\R)$. Then $\tau^{-1}\cdot\tau'\in Z^2(\G,Z(\R)^\times)_{M}$, and moreover $(M,\tau')\sim (M,\tau)$ if and only if  $\tau^{-1}\cdot\tau'\in B^2(\G,Z(\R)^\times)_{M}$.
        \end{enumerate}
\end{teo}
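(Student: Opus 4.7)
The plan is to mirror, almost verbatim, the proof of Theorem~\ref{teo:equivext}, with the group bundle $\N$ replaced by the group bundle $\R^\times$ and the Abelian cohomology machinery applied to $Z(\R)^\times$. The key structural point is that even though $\R$ is a ring, the factor system only uses $\R^\times$ multiplicatively (and conjugates via $\tau$), so every line of the earlier argument transfers. In particular, for any factor system $(M,\tau)$, the conjugation action of $\tau(x,y)$ is trivial on central elements by~(\hyperref[cond:facsysring]{C1}), so $M$ restricts to an honest $\G$-module structure on $Z(\R)^\times$ and the Abelian cohomology groups $H^n(\G, Z(\R)^\times)_M$ are well-defined (\cf~Section~\ref{abeliancohomology}).

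For part~(a), I would invoke Proposition~\ref{prop:equivcrossprod} to write an arbitrary representative of a class in $\Ext(\G,\R)_{[M]}$ in the form $\R\times_{(M',\tau')}\G$ for some factor system $(M',\tau')$ with $M'\sim M$. Picking $h\in C^1(\G,\R^\times)$ with $M'=h.M$, I would then check that $h^{-1}.(M',\tau')=(M,h^{-1}.\tau')$ and conclude from Proposition~\ref{prop:equivcrossprod} that $[\R\times_{(M',\tau')}\G]=[\R\times_{(M,h^{-1}.\tau')}\G]$. Setting $\tau'':=h^{-1}.\tau'$ yields a representative of the required form.

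For part~(b), I would proceed in three steps. First, the twisted action condition~(\hyperref[cond:facsysring]{C1}) for both $(M,\tau)$ and $(M,\tau')$ gives $\tau(x,y)\,n\,\tau(x,y)^{-1}=\tau'(x,y)\,n\,\tau'(x,y)^{-1}$ for every $n\in R_{r(x)}$, so $\tau(x,y)^{-1}\tau'(x,y)\in Z(R_{r(x)})^\times$. Second, a direct calculation using the twisted cocycle condition~(\hyperref[cond:facsysring]{C2}) for $\tau$ and $\tau'$, together with the centrality just established (which lets central factors commute past arbitrary $\tau$-values), shows that $\tau^{-1}\cdot\tau'$ satisfies the 2-cocycle identity in $C^2(\G,Z(\R)^\times)$ with respect to $M$; this is the exact analogue of the computation carried out in the proof of Theorem~\ref{teo:equivext}(b). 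Third, for the coboundary claim, if $(M,\tau')=h.(M,\tau)$ for some $h\in C^1(\G,\R^\times)$, then $h.M=M$ forces $h(x)M_x(n)h(x)^{-1}=M_x(n)$ for every $n\in R_{s(x)}$, hence $h(x)\in Z(R_{r(x)})^\times$, so $h\in C^1(\G,Z(\R)^\times)$; from $\tau'=h.\tau$ and the centrality of $h$ one then reads off
\[
    \tau(x,y)^{-1}\tau'(x,y)=h(x)\,M_x(h(y))\,h(xy)^{-1}=d^1_M(h)(x,y)\in B^2(\G,Z(\R)^\times)_M.
\]
The converse is immediate: if $\tau^{-1}\cdot\tau'=d^1_M(h)$ with $h\in C^1(\G,Z(\R)^\times)$, then $h.(M,\tau)=(M,\tau')$.

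The main (mild) obstacle is purely notational: one must carry the ring multiplication around while only using multiplicative invertibility, and be scrupulous about where central elements live so that the Abelian cocycle identities close up correctly. No genuinely new technical difficulty arises beyond what was already handled in Section~\ref{sec:extgrp}, which is precisely why the authors permit themselves to omit the proof.
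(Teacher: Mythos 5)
Your proposal is correct and coincides with the paper's intended argument: the authors explicitly omit the proof of Theorem~\ref{thm:Classification}, stating that it is handled in the exact same way as Theorem~\ref{teo:equivext}, and your write-up is precisely that transfer (with $\N$ replaced by $\R^\times$ and $Z(\N)$ by $Z(\R)^\times$, using Proposition~\ref{def:crossedproduct} and Proposition~\ref{prop:equivcrossprod} in place of their Section~3 counterparts). No further comment is needed.
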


\begin{cor}[\cf~Corollary~{\ref{cor:simplytransitiv}}]
For a $\G$-kernel $[M]$ with $\Ext(\G,\R)_{[M]}\neq\emptyset$ the following map is a well-defined simply transitive action:
\begin{align*}
    H^2(\G,Z(\R)^\times)_M \times \Ext(\G,\R)_{[M]}\rightarrow \Ext(\G,\R)_{[M]},\ \ \ \  \left([\rho],[\R\times_{(M,\tau)} \G]\right) \mapsto [\R \times_{(M,\tau\cdot \rho)} \G].
\end{align*}
\end{cor}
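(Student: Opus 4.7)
The plan is to reduce the statement to Theorem~\ref{thm:Classification}, exactly mirroring the extension-theoretic argument that (implicitly) proves Corollary~\ref{cor:simplytransitiv}. First I would verify well-definedness of the prescription $([\rho],[\R\times_{(M,\tau)}\G])\mapsto [\R\times_{(M,\tau\cdot\rho)}\G]$ in three layers. To see that $(M,\tau\cdot\rho)$ is a factor system, note that $\rho$ takes central values, so conjugation by $\tau(x,y)\rho(x,y)$ equals conjugation by $\tau(x,y)$, and hence the twisted action condition~(\hyperref[cond:facsysring]{C1}) is inherited from $(M,\tau)$; the twisted cocycle condition~(\hyperref[cond:facsysring]{C2}) follows from the corresponding conditions for $\tau$ and for $\rho\in Z^2(\G,Z(\R)^\times)_M$ together with the fact that $\rho$ is central (so the $M_x$-image of $\rho$ simply moves past things). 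Independence from the choice of representative $\tau$: if $(M,\tau)\sim(M,\tau'')$ then by Theorem~\ref{thm:Classification}(b) we have $\tau^{-1}\cdot\tau''=d^1_M(h)\in B^2(\G,Z(\R)^\times)_M$, whence $(\tau\cdot\rho)^{-1}\cdot(\tau''\cdot\rho)=\tau^{-1}\cdot\tau''\in B^2(\G,Z(\R)^\times)_M$, so $(M,\tau\cdot\rho)\sim(M,\tau''\cdot\rho)$ again by Theorem~\ref{thm:Classification}(b). Independence from the representative $\rho$ in its cohomology class is analogous: replacing $\rho$ by $\rho\cdot d^1_M(h)$ multiplies $\tau\cdot\rho$ by a coboundary and thus does not change its class.

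Next I would check the action axioms. The identity $1\in H^2(\G,Z(\R)^\times)_M$ acts trivially since $\tau\cdot 1=\tau$. For $[\rho],[\rho']\in H^2(\G,Z(\R)^\times)_M$, the centrality of the values of $\rho,\rho'$ makes the pointwise product in the center associative and commutative, so $(\tau\cdot\rho)\cdot\rho'=\tau\cdot(\rho\cdot\rho')$ on the nose; this gives the action property.

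For transitivity, let $[\R\times_{(M,\tau)}\G],[\R\times_{(M',\tau')}\G]\in\Ext(\G,\R)_{[M]}$. Theorem~\ref{thm:Classification}(a) lets us assume $M'=M$ after replacing the second representative by an equivalent one. Then Theorem~\ref{thm:Classification}(b) provides $\rho:=\tau^{-1}\cdot\tau'\in Z^2(\G,Z(\R)^\times)_M$, and clearly $\tau\cdot\rho=\tau'$, so $[\rho]$ moves the first class to the second. For simplicity (uniqueness), suppose $[\rho],[\rho']\in H^2(\G,Z(\R)^\times)_M$ both send $[\R\times_{(M,\tau)}\G]$ to the same class; then $(M,\tau\cdot\rho)\sim(M,\tau\cdot\rho')$, whence by Theorem~\ref{thm:Classification}(b) we get $(\tau\cdot\rho)^{-1}\cdot(\tau\cdot\rho')=\rho^{-1}\cdot\rho'\in B^2(\G,Z(\R)^\times)_M$, i.e., $[\rho]=[\rho']$.

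The main obstacle, such as it is, lies in the verification that $(M,\tau\cdot\rho)$ genuinely satisfies (\hyperref[cond:facsysring]{C2}): one must manipulate $\rho$ past the $M_x$-twist in the twisted cocycle identity for $\tau$, which works precisely because $\rho$ takes values in $Z(\R)^\times$ and each $M_x$ restricts to a map between the relevant center fibers (being a ring isomorphism). All remaining steps are direct transcriptions of the arguments already carried out in Section~\ref{sec:extgrp}, with $\N$ replaced by $\R^\times$, $Z(\N)$ by $Z(\R)^\times$, and the factor systems $(L,\sigma)$ by $(M,\tau)$.
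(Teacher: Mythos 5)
Your proposal is correct and follows exactly the route the paper intends: the corollary is stated without proof because it is a direct consequence of Theorem~\ref{thm:Classification} (mirroring how Corollary~\ref{cor:simplytransitiv} follows from Theorem~\ref{teo:equivext}), and your verification of well-definedness, the action axioms, transitivity via parts (a) and (b), and freeness via part (b) — all hinging on the centrality of the values of $\rho$ and the fact that each $M_x$ preserves centers — is precisely that argument spelled out. No discrepancies with the paper's approach.
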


\begin{teo}[\cf~Lemma~\ref{3-cocycle} and Theorem~{\ref{thm:charclass}}]
Suppose that $(M,\tau), (M',\tau')$ {\small $\in C^1(\G,\iso(\R)) \times C^2(\G,\R^\times)$} satisfy the twisted
action condition~(\hyperref[cond:facsysring]{C1}) and that $M'\sim M.$ Then $\xi_{(M,\tau)}$ and $\xi_{(M',\tau')}$ are cohomologous 3-cocycles in $Z^{3}(\G,Z(\R)^\times)_M$, where 
\begin{align*}
    \xi_{(M,\tau)}(x,y,z) := M_x(\tau(y,z))\tau(x,yz) \tau(xy,z)^{-1}, \qquad (x,y,z) \in \G^{(3)}.
\end{align*}
\end{teo}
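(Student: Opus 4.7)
The plan is to mirror, step for step, the strategy underpinning Lemma~\ref{3-cocycle} together with Theorem~\ref{thm:charclass}, the only change being to pass from the group bundle $\N$ with non-Abelian centre $Z(\N)$ to the unit group bundle $\R^\times$ with multiplicative centre $Z(\R)^\times$. Because the formal structure of the twisted action condition~(\hyperref[cond:facsysring]{C1}) and the twisted cocycle condition~(\hyperref[cond:facsysring]{C2}) matches that of~(\hyperref[cond:facsys]{F1}) and~(\hyperref[cond:facsys]{F2}) verbatim, the bookkeeping transcribes without modification.

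First, I would check that $\xi_{(M,\tau)}$ is indeed $Z(\R)^\times$-valued. Fix $(x,y,z)\in \G^{(3)}$ and an arbitrary $m\in R_{r(x)}$. Iterating~(\hyperref[cond:facsysring]{C1}) twice shows that both $\tau(x,y)\tau(xy,z)$ and $M_x(\tau(y,z))\tau(x,yz)$ implement the same inner automorphism of $R_{r(x)}$, namely the one which, composed with $M_{xyz}$, yields $M_xM_yM_z$. Hence their quotient is central; since each factor is an object-invertible unit of $\R$, it lies in $Z(R_{r(x)})^\times$. This is the exact analogue of the opening paragraph of Lemma~\ref{3-cocycle}. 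Next, I would verify $d^3_M\xi_{(M,\tau)}=1$ by the same lengthy but routine expansion used there: write out $d^3_M\xi_{(M,\tau)}(x,y,z,w)$ in terms of $M$ and $\tau$, introduce the auxiliary products $n_1,n_2,n_3,n_4$ in the ring-bundle analogue of the proof, and note that the centrality of $\xi$-values permits factors to be permuted freely while~(\hyperref[cond:facsysring]{C1}) and~(\hyperref[cond:facsysring]{C2}) cancel the remaining terms in the familiar pattern.

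For the cohomological equivalence claim I would split into two sub-cases. If $M'=M$, then by Theorem~\ref{thm:Classification}(b) one has $\tau^{-1}\cdot\tau' \in C^2(\G,Z(\R)^\times)$, and a direct computation mimicking the first half of the proof of Theorem~\ref{thm:charclass} yields the identity $\xi_{(M,\tau')}\cdot \xi_{(M,\tau)}^{-1} = d^2_M(\tau^{-1}\cdot\tau')$. If instead $M'=h.M$ for some $h\in C^1(\G,\R^\times)$, I would introduce $\theta := h.\tau$ from Equation~\eqref{def:htau}; the ring-bundle analogue of Proposition~\ref{prop:h.(L,sigma)} ensures $(M',\theta)$ satisfies~(\hyperref[cond:facsysring]{C1}), and the calculation from the second half of the proof of Theorem~\ref{thm:charclass}---which shuffles $h$-terms across $M$-twists using~(\hyperref[cond:facsysring]{C1}) and the centrality of $\xi$---gives $\xi_{(M,\tau)} = \xi_{(M',\theta)}$. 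Composing with the previous sub-case applied to $(M',\theta)$ and $(M',\tau')$ delivers $\xi_{(M,\tau)}\sim \xi_{(M',\tau')}$ in $Z^3(\G,Z(\R)^\times)_M = Z^3(\G,Z(\R)^\times)_{M'}$, the last identification holding because $h$-conjugation acts trivially on the centre.

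The main obstacle will be the intertwining identity $\xi_{(M,\tau)} = \xi_{(M',h.\tau)}$ of the preceding paragraph: this is a multiline rearrangement requiring careful tracking of $h$-factors moved across several $M_x$-applications and repeated reorganisation via centrality. Provided one faithfully imports the template of Theorem~\ref{thm:charclass}---substituting $L\leftrightarrow M$, $\sigma\leftrightarrow \tau$, and group inverses with unit inverses in $\R$---no new phenomenon arises and the proof transcribes line-by-line, which is precisely the authors' stated rationale for omitting it.
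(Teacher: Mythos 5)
Your proposal is correct and coincides with the paper's intended argument: the authors omit the proof precisely because, as you describe, it is the line-by-line transcription of Lemma~\ref{3-cocycle} and Theorem~\ref{thm:charclass} under the substitutions $L\leftrightarrow M$, $\sigma\leftrightarrow\tau$, $Z(\N)\leftrightarrow Z(\R)^\times$. One small point worth flagging: the displayed formula for $\xi_{(M,\tau)}$ in the statement omits the final factor $\tau(x,y)^{-1}$ present in $\chi_{(L,\sigma)}$, and your centrality argument (comparing the inner automorphisms implemented by $\tau(x,y)\tau(xy,z)$ and $M_x(\tau(y,z))\tau(x,yz)$) implicitly and correctly works with the full four-factor expression, so the statement should be read with that factor restored.
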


\begin{cor}[\cf~Corollary~{\ref{cor:indiclass}}]
Suppose that $M \in C^1(\G,\iso(\R))$ is outer and choose $\tau \in C^2(\G,\R^\times)$ such that $(M,\tau)$ satisfies the twisted action condition~(\hyperref[cond:facsysring]{C1}).
Then the cohomology class $[\xi_{(M,\tau)}] \in H^3(\G,Z(\R)^\times)_M$ does not depend on the choice of $\tau$ and is constant on the equivalence class $[M]$.
\end{cor}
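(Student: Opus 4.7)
The plan is to derive this corollary as an immediate consequence of the preceding theorem (the ring-theoretic analogue of Theorem~\ref{thm:charclass}), following verbatim the template established by Corollary~\ref{cor:indiclass} in the groupoid setting. The preceding theorem already asserts that $\xi_{(M,\tau)}$ is a 3-cocycle in $Z^3(\G,Z(\R)^\times)_M$ whenever $(M,\tau)$ satisfies (C1), so the content of this corollary is the two invariance statements.

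For the independence of the choice of $\tau$, I would fix an outer $M \in C^1(\G,\iso(\R))$ and let $\tau,\tau' \in C^2(\G,\R^\times)$ be such that both $(M,\tau)$ and $(M,\tau')$ satisfy~(C1). Applying the preceding theorem to the pairs $(M,\tau)$ and $(M',\tau') := (M,\tau')$, with the trivial equivalence $M \sim M$ witnessed by $h = \one$, I conclude that $\xi_{(M,\tau)}$ and $\xi_{(M,\tau')}$ are cohomologous, so $[\xi_{(M,\tau)}] = [\xi_{(M,\tau')}]$ in $H^3(\G,Z(\R)^\times)_M$.

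For constancy on the equivalence class $[M]$, suppose $M' \sim M$, say $M' = h.M$ for some $h \in C^1(\G,\R^\times)$. Pick any $\tau \in C^2(\G,\R^\times)$ with $(M,\tau)$ satisfying (C1); then the ring analogue of Proposition~\ref{prop:h.(L,sigma)} produces $\tau' := h.\tau \in C^2(\G,\R^\times)$ such that $(M',\tau')$ again satisfies (C1). Invoking the preceding theorem once more yields that $\xi_{(M,\tau)}$ and $\xi_{(M',\tau')}$ are cohomologous in $Z^3(\G,Z(\R)^\times)_M$, so $[\xi_{(M,\tau)}] = [\xi_{(M',\tau')}]$ as classes in $H^3(\G,Z(\R)^\times)_M$.

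The only subtle point—and the single thing worth flagging—is the implicit identification $H^3(\G,Z(\R)^\times)_M = H^3(\G,Z(\R)^\times)_{M'}$ used in the last step. This identification is justified by the simple observation that if $M' = h.M$, then for each $x\in\G$ and $z \in Z(R_{s(x)})^\times$ one has $M'_x(z) = h(x)M_x(z)h(x)^{-1} = M_x(z)$, since $M_x(z)$ lies in $Z(R_{r(x)})^\times$. Hence $M$ and $M'$ restrict to the same $\G$-action on the central bundle $Z(\R)^\times$, so the cochain complexes $(C^\bullet(\G,Z(\R)^\times),d^\bullet_M)$ and $(C^\bullet(\G,Z(\R)^\times),d^\bullet_{M'})$ coincide on the nose, not merely up to isomorphism. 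With this remark in place, everything else is a direct application of the preceding theorem and no further computation is required.
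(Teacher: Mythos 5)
Your proposal is correct and matches the paper's intended argument: the paper omits the proof (stating that all results in this section are proved exactly as their counterparts in Section~\ref{sec:extgrp}), and there Corollary~\ref{cor:indiclass} is likewise an immediate consequence of Theorem~\ref{thm:charclass}, applied once with $M'=M$ and once with $M'=h.M$. The identification $H^3(\G,Z(\R)^\times)_M = H^3(\G,Z(\R)^\times)_{M'}$ that you flag is exactly the observation with which the paper opens its proof of Theorem~\ref{thm:charclass}, so nothing is missing.
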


On account of Corollary~\ref{cor:indiclass}, each outer element $L \in C^1(\G,\iso(\N))$ gives rise to a characteristic class $\xi(M) \in H^3(\G,Z(\R)^\times)_M$.

\begin{cor}[\cf~Corollary~\ref{cor:charclass}]
For a $\G$-kernel $[M]$ we have $\Ext(\G,\R)_{[M]}\neq\emptyset$ if and only if the characteristic class $\xi(M) \in H^3(\G,Z(\R)^\times)_M$ is trivial.
\end{cor}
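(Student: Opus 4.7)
The plan is to transcribe the proof of Corollary~\ref{cor:charclass} symbol-by-symbol into the ring-theoretic framework, with $\N$ replaced by $\R^\times$, $L$ by $M$, $\sigma$ by $\tau$, $\chi$ by $\xi$, and $Z(\N)$ by $Z(\R)^\times$. The ring-analogs of Lemma~\ref{3-cocycle} and Theorem~\ref{thm:charclass} needed to make sense of the characteristic class $\xi(M)$ are already stated in the excerpt immediately preceding this corollary, so the argument below may freely invoke them.

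For the forward implication, I would start from the hypothesis $\Ext(\G,\R)_{[M]} \neq \emptyset$. By Theorem~\ref{thm:Classification}~(a), after passing to a suitable representative, one may assume that the given class has the form $[\R \times_{(M,\tau)} \G]$ for some honest factor system $(M,\tau) \in Z^2(\G,\R)$. Since $(M,\tau)$ satisfies the twisted cocycle condition~(\hyperref[cond:facsysring]{C2}), the 3-cocycle $\xi_{(M,\tau)}$ is identically trivial, so $\xi(M) = [\xi_{(M,\tau)}] = 0$ in $H^3(\G,Z(\R)^\times)_M$.

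For the converse, I would assume $M \in C^1(\G,\iso(\R))$ is outer with $\xi(M)$ trivial and pick $\tau \in C^2(\G,\R^\times)$ such that $(M,\tau)$ satisfies~(\hyperref[cond:facsysring]{C1}). Triviality of $\xi(M)$ means that $\xi_{(M,\tau)}$ is a 3-coboundary in $Z^3(\G,Z(\R)^\times)_M$, so there exists $\rho \in C^2(\G,Z(\R)^\times)$ with $\xi_{(M,\tau)} = d^2_M(\rho^{-1})$. The goal is then to check that $(M, \tau\cdot\rho)$ is a factor system: because $\rho$ takes values in the center, multiplying $\tau$ by $\rho$ does not affect conjugation, so~(\hyperref[cond:facsysring]{C1}) is preserved, and a direct computation---analogous to the one implicit in the proof of Theorem~\ref{thm:charclass}---yields the identity $\xi_{(M,\tau\cdot\rho)} = \xi_{(M,\tau)}\cdot d^2_M(\rho)$, which equals $1$ by the choice of $\rho$, establishing~(\hyperref[cond:facsysring]{C2}). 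The crossed product $\R\times_{(M,\tau\cdot\rho)}\G$ then represents a class in $\Ext(\G,\R)_{[M]}$.

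The only genuinely delicate step is the identity $\xi_{(M,\tau\cdot\rho)} = \xi_{(M,\tau)}\cdot d^2_M(\rho)$. Although it is a mechanical adaptation of its group-theoretic counterpart, care is required because $\R^\times$ is non-Abelian in general, so one must repeatedly use the centrality of $\rho$ to commute factors past elements of $\R^\times$ and, in particular, past images under the $M_x$. Beyond this bookkeeping, the argument is a formal replay of Corollary~\ref{cor:charclass} and introduces no essentially new ideas.
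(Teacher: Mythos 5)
Your proposal is correct and follows essentially the same route as the paper: the paper explicitly states that the results of Section~5 are proved by transcribing the arguments of Section~3, and your argument is precisely the proof of Corollary~\ref{cor:charclass} (forward direction via a representative $\R\times_{(M,\tau)}\G$ whose honest factor system forces $\xi_{(M,\tau)}=1$; converse via writing $\xi_{(M,\tau)}=d^2_M(\rho^{-1})$ for central $\rho$ and correcting $\tau$ to $\tau\cdot\rho$) carried over verbatim. The identity $\xi_{(M,\tau\cdot\rho)}=\xi_{(M,\tau)}\cdot d^2_M(\rho)$ that you flag as the delicate step is exactly the implicit step in the paper's proof, so no new ideas are introduced on either side.
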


\section*{Acknowledgement}
The first named author was financed by the
Coordenação de Aperfeiçoamento de Pessoal de Nível Superior - Brasil (CAPES) - Finance Code 001.

\bibliographystyle{abbrv}
\bibliography{NAEG}

\end{document}